\newlength{\extramargin}
\newcommand{\Real}{\ensuremath{{\mathbb{R}}}}
\newcommand{\Complex}{\ensuremath{{\mathbb{C}}}}
\newcommand{\C}{\ensuremath{\mathcal C}}
\newcommand{\D}{\ensuremath{\mathcal D}}
\newcommand{\V}{\ensuremath{\mathcal V}}
\newcommand{\G}{\ensuremath{\mathcal G}}
\newcommand{\B}{\ensuremath{\mathcal B}}
\newcommand{\one}{\ensuremath{{\mathbf{1}}}}
\newtheorem{theorem}{Theorem}
\newtheorem{corollary}{Corollary}
\newtheorem{lemma}{Lemma}
\newtheorem{definition}{Definition}
\newtheorem{remark}{Remark}
\newtheorem{proposition}{Proposition}
\newenvironment{proof}{\noindent {\bf Proof.}}{\hfill \hspace*{1pt}\hfill$\blacksquare$}
\begin{document}
\title{Structural analysis of synchronization in networks of linear oscillators}
\author{S. Emre Tuna\footnote{The author is with Department of
Electrical and Electronics Engineering, Middle East Technical
University, 06800 Ankara, Turkey. Email: {\tt etuna@metu.edu.tr}}}
\maketitle

\begin{abstract}
In networks of identical linear oscillators (e.g. pendulums
undergoing small vibrations) coupled through both dissipative
connectors (e.g. dampers) and restorative connectors (e.g. springs)
the relation between asymptotic synchronization and coupling
structure is studied. Conditions on the interconnection under which
synchronization can be achieved for some selection of coupling
strengths are established. How to strengthen those conditions so
that synchronization is guaranteed for all admissible parameter
values is also presented.
\end{abstract}

\section{Introduction}

Consider the coupled array of $q\geq 2$ linear time-invariant (LTI)
oscillators
\begin{eqnarray}\label{eqn:oscillator}
M{\ddot x}_{i}+Kx_{i}+B\left(\sum_{j=1}^{q}d_{ij}({\dot y}_{i}-{\dot
y}_{j})+\sum_{j=1}^{q}r_{ij}(y_{i}-y_{j})\right)&=&0\,,\qquad
y_{i}=B^{T}x_{i}
\end{eqnarray}
for $i=1,\,2,\,\ldots,\,q$; where $x_{i}\in\Real^{n}$,
$y_{i}\in\Real$, the matrices $M,\,K\in\Real^{n\times n}$ are
symmetric positive definite, and $B\in\Real^{n\times 1}$. Note that
the special case $n=1$ describes an assembly of coupled harmonic
oscillators \cite{ren08}. The scalars $d_{ij}=d_{ji}\geq 0$ are the
dissipative coupling strengths and $r_{ij}=r_{ji}\geq 0$ the
restorative coupling strengths. (We let $d_{ii}=0$ and $r_{ii}=0$.)
Therefore the overall coupling throughout the array can be
represented by the pair of laplacian matrices
\begin{eqnarray*}
D =
\left[\begin{array}{cccc}\sum_{j}d_{1j}&-d_{12}&\cdots&-d_{1q}\\
-d_{21}&\sum_{j}d_{2j}&\cdots&-d_{2q}\\
\vdots&\vdots&\ddots&\vdots\\
-d_{q1}&-d_{q2}&\cdots&\sum_{j}d_{qj}
\end{array}\right]\quad\mbox{and}\quad
R =
\left[\begin{array}{cccc}\sum_{j}r_{1j}&-r_{12}&\cdots&-r_{1q}\\
-r_{21}&\sum_{j}r_{2j}&\cdots&-r_{2q}\\
\vdots&\vdots&\ddots&\vdots\\
-r_{q1}&-r_{q2}&\cdots&\sum_{j}r_{qj}
\end{array}\right]\,.
\end{eqnarray*}
A simple illustration of the setup~\eqref{eqn:oscillator} is shown
in Fig.~\ref{fig:LCarray}, where four fourth-order LC oscillators
are coupled via an LTI resistor (with conductance $g_{13}$) and
three LTI inductors (with inductances
$\ell_{12},\,\ell_{23},\,\ell_{34}$). The associated laplacian
matrices read
\begin{eqnarray*}
D_{1}=\left[\begin{array}{cccc}
g_{13}&0&-g_{13}&0\\
0&0&0&0\\
-g_{13}&0&g_{13}&0\\
0&0&0&0\\
\end{array}\right]\quad\mbox{and}\quad
R_{1}=\left[\begin{array}{cccc}
\ell_{12}^{-1}&-\ell_{12}^{-1}&0&0\\
-\ell_{12}^{-1}&\ell_{12}^{-1}+\ell_{23}^{-1}&-\ell_{23}^{-1}&0\\
0&-\ell_{23}^{-1}&\ell_{23}^{-1}+\ell_{34}^{-1}&-\ell_{34}^{-1}\\
0&0&-\ell_{34}^{-1}&\ell_{34}^{-1}
\end{array}\right]\,.
\end{eqnarray*}
\begin{figure}[h]
\begin{center}
\includegraphics[scale=0.45]{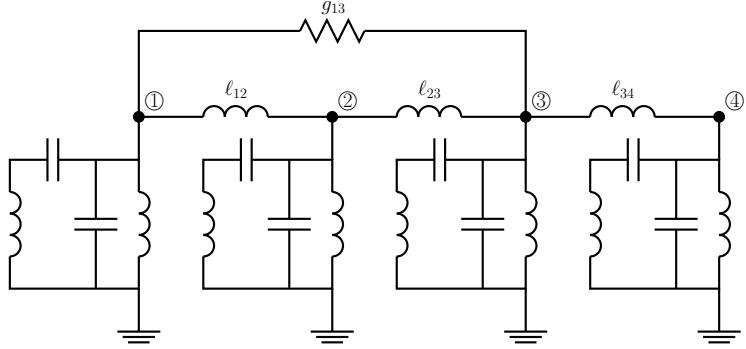}
\caption{An array of fourth-order oscillators.}\label{fig:LCarray}
\end{center}
\end{figure}

In a recent work \cite{tuna19b}, under a controllability
assumption\footnote{The triple $(M,\,K,\,B)$ satisfies: ${\rm
rank}\left[\begin{array}{c}K-\omega^{2}M\\B^{T}\end{array}\right]=n$
for all $\omega\in\Real_{>0}$.} on the triple $(M,\,K,\,B)$, it was
shown that
\begin{proposition}\label{prop:one}
If ${\rm Re}\,\lambda_{2}(D+jR)>0$\footnote{The definition of
$\lambda_{2}(\cdot)$ is given in the next section.} then (and only
then) the oscillators~\eqref{eqn:oscillator} asymptotically
synchronize, i.e., $\|x_{k}(t)-x_{\ell}(t)\|\to0$ for all
$(k,\,\ell)$ and all initial conditions.
\end{proposition}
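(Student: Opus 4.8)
\medskip
\noindent\textbf{Proof strategy.} The plan is to recast~\eqref{eqn:oscillator} in aggregate form and then peel off the synchronized motion. Stacking $x=(x_1,\dots,x_q)$ and $y=(y_1,\dots,y_q)=(I_q\otimes B^{T})x$, and noting that the $i$-th coupling term is $B\big((D\dot y)_i+(Ry)_i\big)$, the array becomes $(I_q\otimes M)\ddot x+(D\otimes BB^{T})\dot x+\big((I_q\otimes K)+(R\otimes BB^{T})\big)x=0$. I would then observe that the synchronization manifold $\mathcal S=\mathrm{range}(\mathbf 1_q\otimes I_n)$ is invariant, that ``$\|x_k-x_\ell\|\to0$ for all $(k,\ell)$'' is equivalent to ``$\Pi x(t)\to0$'' for the orthogonal projector $\Pi=(I_q-\tfrac1q\mathbf 1_q\mathbf 1_q^{T})\otimes I_n$ onto $\mathcal S^{\perp}$, and that --- because $D\mathbf 1_q=R\mathbf 1_q=0$ --- all four coefficient matrices commute with $\Pi$. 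Hence the $\Pi x$-dynamics (the ``disagreement subsystem'') decouple from the residual $\mathcal S$-motion (a plain undamped oscillation), and \emph{synchronization for all initial conditions is equivalent to asymptotic stability of the disagreement subsystem}. I would also record that $D+jR$ leaves $\mathbf 1_q^{\perp}$ invariant and annihilates $\mathbf 1_q$, so $\mathrm{spec}(D+jR)=\{0\}\sqcup\mathrm{spec}\big((D+jR)|_{\mathbf 1_q^{\perp}}\big)$; since $D,R\succeq0$ put $\mathrm{spec}(D+jR)$ in the closed first quadrant, $\mathrm{Re}\,\lambda_2\ge0$ always, and $\mathrm{Re}\,\lambda_2>0$ means precisely that $D+jR$ has \emph{no} eigenvector in $\mathbf 1_q^{\perp}$ with purely imaginary eigenvalue.

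For the ``if'' direction the plan is an energy/LaSalle argument on the disagreement subsystem. Take $E=\tfrac12\dot x^{T}(I\otimes M)\dot x+\tfrac12 x^{T}(I\otimes K)x+\tfrac12 x^{T}(R\otimes BB^{T})x=\tfrac12\sum_i(\dot x_i^{T}M\dot x_i+x_i^{T}Kx_i)+\tfrac12 y^{T}Ry$, which is positive definite on $\mathcal S^{\perp}$, and show $\dot E=-\dot x^{T}(D\otimes BB^{T})\dot x=-\dot y^{T}D\dot y\le0$ along solutions. LaSalle then drives the disagreement trajectories to the largest invariant set inside $\{D\dot y=0\}$. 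On that set the dissipative force $(I\otimes B)(D\dot y)$ vanishes, so the motion solves the conservative equation $(I\otimes M)\ddot x+\big((I\otimes K)+(R\otimes BB^{T})\big)x=0$, whose pencil is symmetric positive definite; such a motion is a superposition of modes $e^{\pm j\nu t}v$ with $\nu>0$ real and $\sum_k v_k=0$, and each mode must itself satisfy $D\dot y\equiv0$, i.e. $D\hat v=0$ with $\hat v:=(I\otimes B^{T})v\perp\mathbf 1_q$.

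The decisive step is to rule out a nonzero such mode. From the $k$-th block of the mode equation, $(K-\nu^{2}M)v_k=-B(R\hat v)_k$. If $K-\nu^{2}M$ is nonsingular one inverts it and applies $B^{T}$ to get $\big(I+g(\nu)R\big)\hat v=0$ with the scalar $g(\nu)=B^{T}(K-\nu^{2}M)^{-1}B\in\Real$; together with $D\hat v=0$ this forces $\hat v=0$ or $(D+jR)\hat v=-\tfrac{j}{g(\nu)}\hat v$, i.e. a purely imaginary eigenvalue of $D+jR$ with eigenvector in $\mathbf 1_q^{\perp}$ --- excluded by hypothesis. If $K-\nu^{2}M$ is singular (so $\nu$ is a natural frequency of the isolated oscillator), the controllability assumption $\mathrm{rank}[\,K-\nu^{2}M\,;\,B^{T}\,]=n$ says $\ker(K-\nu^{2}M)\cap\ker B^{T}=\{0\}$; the solvability condition for $v_k$ then forces $R\hat v=0$, hence $D\hat v=R\hat v=0$, again excluded unless $\hat v=0$, and $\hat v=0$ with $(K-\nu^{2}M)v_k=0$, $B^{T}v_k=0$ and the same controllability assumption forces $v=0$. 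Thus the only invariant trajectory in $\{D\dot y=0\}$ is the zero one, the disagreement subsystem is asymptotically stable, and synchronization follows. For ``only then'': if $\mathrm{Re}\,\lambda_2(D+jR)=0$, pick a unit eigenvector $\hat w$ for that eigenvalue (it may be taken in $\mathbf 1_q^{\perp}$); expanding $\hat w^{*}(D+jR)\hat w$ and using $D,R\succeq0$ yields $D\hat w=0$ and $R\hat w=b\hat w$ for some $b\ge0$ (with $\hat w$ real without loss of generality), and then $x(t)=\hat w\otimes\xi(t)$, with $\xi$ any nonzero solution of $M\ddot\xi+(K+bBB^{T})\xi=0$ (undamped, since $K+bBB^{T}\succ0$), solves the array, stays bounded away from $\mathcal S$, and fails to synchronize.

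I expect the main obstacle to be that $D$ and $R$, hence the disagreement mode shapes, need not be simultaneously diagonalizable, so one cannot simply decouple the array into scalar oscillators; the rank-one structure of $BB^{T}$ is what makes the argument go through, collapsing the $n$-dimensional oscillator into the single scalar function $g(\nu)$ and concentrating everything onto the one vector $\hat v\in\mathbf 1_q^{\perp}$. The second delicate point is the resonant case $\det(K-\nu^{2}M)=0$, where the controllability hypothesis must be used twice --- to annihilate the forcing term and to exclude purely kernel-valued mode shapes.
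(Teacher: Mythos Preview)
This proposition is not proved in the present paper; it is quoted from the earlier work~\cite{tuna19b} and serves here only as the spectral criterion on which Definitions~\ref{def:SS} and~\ref{def:SSS} are built. There is therefore no in-paper proof to compare your proposal against.

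For what it is worth, your strategy is correct. The aggregate form, the projection onto $\mathcal S^{\perp}$, the energy $E$ with $\dot E=-\dot y^{T}D\dot y\le0$, and the LaSalle reduction to a conservative modal problem on the largest invariant subset of $\{D\dot y=0\}$ are all the natural steps. In both the nonsingular and the resonant pencil cases your argument correctly collapses the obstruction to the existence of an eigenvector of $R$ in ${\rm null}\,D\setminus{\rm span}\,\{\one_{q}\}$, which by (the equivalent of) Lemma~\ref{fact:two} is precisely the negation of ${\rm Re}\,\lambda_{2}(D+jR)>0$. The ``only then'' construction $x(t)=\hat w\otimes\xi(t)$ is likewise valid. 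The two subtleties you flag---that $D$ and $R$ need not commute, and the resonant case $\det(K-\nu^{2}M)=0$---are indeed the only nontrivial points, and your use of the scalar transfer function $g(\nu)=B^{T}(K-\nu^{2}M)^{-1}B$ together with the controllability rank condition handles them cleanly.
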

This means that the pair $(D,\,R)$ completely characterizes the
collective behavior of the array~\eqref{eqn:oscillator} from the
synchronization point of view. Be that as it may, one would still
like to decipher (to some degree) the condition ${\rm
Re}\,\lambda_{2}(D+jR)>0$ in order to expand our understanding on
synchronization of oscillators. To this end, in this paper we study
the relation between the coupling structure and synchronization. In
other words, we investigate conditions on the underlying graphs
(disregarding the coupling strengths) that yield synchronization.
Therefore our object of inquiry here is the triple $(\V,\,\B_{\rm
d},\,\B_{\rm r})$, where
$\V=\{\nu_{1},\,\nu_{2},\,\ldots,\,\nu_{q}\}$ is the set of {\em
vertices} and the {\em edge} sets $\B_{\rm
d}=\{\{\nu_{i},\,\nu_{j}\}:d_{ij}\neq 0\}$ and $\B_{\rm
r}=\{\{\nu_{i},\,\nu_{j}\}:r_{ij}\neq 0\}$ represent the dissipative
connectors and restorative connectors, respectively. We consider two
problems; more properly speaking, two facets of one problem. To
describe those facets let us revisit our example circuitry in
Fig.~\ref{fig:LCarray} under two different scenarios. In the first
scenario, suppose that some of the coupling parameters (resistances
and inductances) are not exactly known or perhaps subject to change
due for instance to the variations of the environment temperature.
In such a case Proposition~\ref{prop:one} cannot be directly used
(because the value of ${\rm Re}\,\lambda_{2}(D+jR)$ is out of reach)
and one faces the problem of having to determine synchronization (if
possible) by looking at the interconnection $(\V,\,\B_{\rm
d},\,\B_{\rm r})$ only. For certain interconnections it is indeed
possible to be sure of asymptotic synchronization without any
knowledge (except for their sign) about the coupling strengths. Such
interconnections are said to have strong structural synchronization
(SSS) property (see Definition~\ref{def:SSS} for the formal
description) and as a part of our analysis here we investigate the
conditions guaranteeing such outcome. Another interesting situation
is that where the coupling strengths are design parameters (i.e., we
are free to choose the resistances and inductances) but the
underlying interconnection structure (which oscillator is connected
to which and with what type of connector) is predetermined and
cannot be altered. When dealing with such a case it is desirable to
know beforehand whether the given topology admits a set of parameter
values that achieves synchronization. If it does admit then we say
that it has structural synchronization (SS) property (see
Definition~\ref{def:SS}) and unearthing the conditions yielding
structural synchronization makes the other facet of the problem we
study here. Our main findings are listed below.

We establish that an interconnection is SS\footnote{Henceforth we
sometimes say an interconnection {\em is} SS to mean that it has the
SS property.} if and only if the union of the two coupling graphs
(one of them representing the dissipative coupling, the other
representing the restorative coupling) is connected and there exists
at least one dissipative connector (Theorem~\ref{thm:easy}). The
conditions guaranteeing SSS turn out to be slightly more elaborate.
We show that an interconnection is SSS if and only if it admits a
sort of flow network, where edge currents and vertex potentials obey
certain relatively nontechnical rules\footnote{These rules are
listed as (A1), (A2), (A3) later in the paper.}
(Theorem~\ref{thm:SSS2}). We also apply this result to some
benchmark topologies and obtain simple tests to check SSS property
of an interconnection when it is either a path or a cycle or a tree.

The attempts to understand behavioral properties of networks based
only on their structures have so far focused almost exclusively on
the issue of controllability \cite{xiang19,liu16}. Structural
controllability is first addressed in \cite{lin74}. The problem is
to determine whether it is possible to obtain a controllable pair
$(A,\,B)$ of matrices (representing an LTI system) under the
constraint that certain entries of the matrices must be fixed at
zero. Some time later strong structural controllability is
introduced in \cite{mayeda79}, where this time the controllability
of all admissible pairs is under consideration. Motivated by the
emergence of multi-agent systems and the need to control them, the
structural controllability theory has recently enjoyed many
important developments
\cite{rahmani09,zhang14,wang17,hou16,parlangeli12}, to name but a
few, and found interesting applications, e.g., \cite{xin18}. Despite
the rapid advances in network controllability, the relation between
coupling structure and synchronization in autonomous networks (i.e.,
those without control inputs) is relatively an unexplored field. One
of the very few works on structural synchronization is
\cite{celikovsky07}, where a pair of coupled generalized Lorenz
chaotic systems is analyzed theoretically, while the cases with
higher number of nodes is studied through numerical experiments. To
the best of our knowledge, structural synchronization analysis of
coupled LTI oscillators through (generalized) graphs $(\V,\,\B_{\rm
d},\,\B_{\rm r})$ with two different edge sets $\B_{\rm d},\,\B_{\rm
r}$ is absent from the current literature. Our contribution here is
therefore intended to be twofold: (i) bringing this void to the
attention of researchers and (ii) partially filling it by some
preliminary results.

\section{Problem statement}

We introduce some notation first. Let $e_{k}\in\Real^{q}$ be the
unit vector whose $k$th entry is 1, i.e., $e_{k}$ is the $k$th
column of the identity matrix $I$. We let $\one_{q}\in\Real^{q}$
denote the vector of all ones. A {\em graph} is a pair of sets
$(\V,\,\B)$, where $\V=\{\nu_{1},\,\nu_{2},\,\ldots,\,\nu_{q}\}$ is
the (nonempty) set of vertices (nodes) and
$\B=\{\beta_{1},\,\beta_{2},\,\ldots,\,\beta_{p}\}$ is the set of
edges (branches), where each edge is an (unordered) pair
$\{\nu_{k},\,\nu_{\ell}\}\subset\V$ of distinct vertices. The graph
$(\V,\,\B)$ can be represented by its {\em incidence matrix}
$G\in\Real^{q\times p}$ whose $i$th column is $g_{i}=e_{k}-e_{\ell}$
(or $g_{i}=e_{\ell}-e_{k}$) whenever
$\{\nu_{k},\,\nu_{\ell}\}=\beta_{i}\in\B$. Note that when $\B$ is
empty, $G$ becomes the $q$-by-0 {\em empty matrix} which (by
definition) satisfies: ${\rm range}\,G=\{0\}\subset\Real^{q}$ and
${\rm null}\,G^{T}=\Real^{q}$. Two distinct vertices
$\nu_{i},\,\nu_{j}\in\V$ are said to be {\em connected} if
$e_{i}-e_{j}\in{\rm range}\,G$. The graph $(\V,\,\B)$ is said to be
{\em connected} if every pair of distinct vertices is connected, or,
equivalently, if ${\rm null}\,G^{T}={\rm span}\,\{\one_{q}\}$. The
connected subgraphs
$(\V_{1},\,\B_{1}),\,(\V_{2},\,\B_{2}),\,\ldots,\,(\V_{c},\,\B_{c})$
are said to be the {\em components} of the graph $(\V,\,\B)$ if the
edge sets $\B_{i}$ (some of which may be empty) satisfy
$\bigcup_{i=1}^{c}\B_{i}=\B$ and the pairwise disjoint vertex sets
$\V_{i}$ (some of which may be singleton) satisfy
$\bigcup_{i=1}^{c}\V_{i}=\V$. Note that a connected graph has a
single component: itself. We also note that the number of components
satisfy $c={\rm dim}\,{\rm null}\,G^{T}$. A {\em laplacian matrix}
$L\in\Real^{q\times q}$ associated to the graph $(\V,\,\B)$ has the
form $L=G\Lambda G^{T}$, where the $p\times p$ diagonal matrix
$\Lambda={\rm diag}\,(w_{1},\,w_{2},\,\ldots,\,w_{p})$ stores the
edge weights $w_{i}>0$. If $\B=\emptyset$ then we let $L=0$. Note
that every laplacian matrix $L$ is symmetric positive semidefinite
and that ${\rm null}\,L={\rm null}\,G^{T}$. In particular,
$L\one_{q}=0$ since $G^{T}\one_{q}=0$. The set of all laplacian
matrices associated to the pair $(\V,\,\B)$ is denoted by ${\rm
lap}\,(\V,\,\B)$. A pair of graphs $[(\V,\,\B_{1}),\,(\V,\,\B_{2})]$
that share the same vertex set $\V$ defines an {\em
interconnection}, which we denote by the triple
$(\V,\,\B_{1},\,\B_{2})$. Given $X\in\Complex^{q\times q}$, we let
$\lambda_{k}(X)$ denote the $k$th smallest eigenvalue of $X$ with
respect to the real part. That is, ${\rm Re}\,\lambda_{1}(X)\leq{\rm
Re}\,\lambda_{2}(X)\leq\cdots\leq{\rm Re}\,\lambda_{q}(X)$. ${\rm
sgn}(\cdot)$ denotes the sign function. For a vector
$\eta=[\eta_{1}\ \eta_{2}\ \cdots\ \eta_{q}]^{T}\in\Real^{q}$ we let
${\rm sgn}(\eta)=[{\rm sgn}(\eta_{1})\ {\rm sgn}(\eta_{2})\ \cdots\
{\rm sgn}(\eta_{q})]^{T}$. Occasionally we write $\eta\equiv\xi$ in
lieu of ${\rm sgn}(\eta)={\rm sgn}(\xi)$. The following pair of
definitions (based on Proposition~\ref{prop:one}) is the workhorse
of our analysis.

\begin{definition}\label{def:SS}
An interconnection $(\V,\,\B_{\rm d},\,\B_{\rm r})$ is said to have
the {\em structural synchronization (SS)} property if ${\rm
Re}\,\lambda_{2}(D+jR)>0$ for some laplacian matrices $D\in{\rm
lap}(\V,\,\B_{\rm d})$ and $R\in{\rm lap}(\V,\,\B_{\rm r})$.
\end{definition}

\begin{definition}\label{def:SSS}
An interconnection $(\V,\,\B_{\rm d},\,\B_{\rm r})$ is said to have
the {\em strong structural synchronization (SSS)} property if ${\rm
Re}\,\lambda_{2}(D+jR)>0$ for all laplacian matrices $D\in{\rm
lap}(\V,\,\B_{\rm d})$ and $R\in{\rm lap}(\V,\,\B_{\rm r})$.
\end{definition}

We can now state the problems we study in this paper. Problem 1:
{\em Find conditions on the interconnection $(\V,\,\B_{\rm
d},\,\B_{\rm r})$ that guarantee structural synchronization
property.} Problem 2: {\em Find conditions on the interconnection
$(\V,\,\B_{\rm d},\,\B_{\rm r})$ that guarantee strong structural
synchronization property.} The solution of the second problem partly
depends on that of Problem~1. We therefore study structural
synchronization first.

\section{Structural synchronization}

It turns out to be very easy to check whether a given
interconnection is SS or not:

\begin{theorem}\label{thm:easy}
An interconnection $(\V,\,\B_{\rm d},\,\B_{\rm r})$ is SS if and
only if the graph $(\V,\,\B_{\rm d}\cup\B_{\rm r})$ is connected and
$\B_{\rm d}$ is nonempty.
\end{theorem}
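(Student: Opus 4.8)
The plan is to prove both directions by exploiting the structure of $L = D+jR$ as a complex Hermitian-like Laplacian. Since $D$ and $R$ are real symmetric positive semidefinite Laplacians with $D\one_q = R\one_q = 0$, the matrix $L$ is a complex symmetric matrix (not Hermitian) annihilating $\one_q$, so $0$ is always an eigenvalue; the content of SS is that $0$ is a \emph{simple} eigenvalue after accounting for the ordering by real part, i.e. that $\ker L = \mathrm{span}\{\one_q\}$ \emph{and} no other eigenvalue has nonpositive real part. The key algebraic fact I would establish first is a quadratic-form estimate: for any $v\in\Complex^q$, $\mathrm{Re}\,(v^*Lv) = v^*Dv \ge 0$, because $v^*Dv$ is real (D symmetric PSD) and $\mathrm{Re}(v^*(jR)v) = -\mathrm{Im}(v^*Rv)$... more carefully, $v^* R v$ is real so $v^*(jR)v = j(v^*Rv)$ is purely imaginary, giving $\mathrm{Re}(v^*Lv) = v^*Dv \ge 0$. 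Hence every eigenvalue of $L$ has nonnegative real part automatically, and an eigenvalue $\lambda$ with $\mathrm{Re}\,\lambda = 0$ forces (taking $v$ a unit eigenvector) $v^*Dv = 0$, i.e. $Dv = 0$ since $D\succeq 0$; then $Lv = \lambda v$ reduces to $jRv = \lambda v$, and since $Rv$ is... I would push this to conclude $Rv = 0$ and $\lambda = 0$ as well. Thus: \emph{$\mathrm{Re}\,\lambda_2(D+jR) > 0$ if and only if $\ker D \cap \ker R = \mathrm{span}\{\one_q\}$.}

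Given that reduction, the theorem becomes purely graph-theoretic. Recall from the preliminaries that $\ker D = \mathrm{null}\,G_{\rm d}^T$ and $\ker R = \mathrm{null}\,G_{\rm r}^T$ where $G_{\rm d}, G_{\rm r}$ are incidence matrices of $(\V,\B_{\rm d})$ and $(\V,\B_{\rm r})$ — crucially this holds for \emph{every} choice of positive edge weights, so the condition $\ker D \cap \ker R = \mathrm{span}\{\one_q\}$ is the \emph{same} for all admissible $D,R$; this already shows en route that for this problem SS and SSS coincide on the connectivity part (the distinction in the full paper will come from the requirement $\B_{\rm d}\neq\emptyset$ being checkable versus the finer SSS conditions). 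Now $\mathrm{null}\,G_{\rm d}^T \cap \mathrm{null}\,G_{\rm r}^T = \mathrm{null}\,[G_{\rm d}\ G_{\rm r}]^T$, and $[G_{\rm d}\ G_{\rm r}]$ is precisely an incidence matrix of $(\V,\B_{\rm d}\cup\B_{\rm r})$ (modulo repeated edges, which do not change the null space). So the intersection equals $\mathrm{span}\{\one_q\}$ if and only if $(\V,\B_{\rm d}\cup\B_{\rm r})$ is connected.

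It remains to bring in the condition that $\B_{\rm d}$ is nonempty. If $\B_{\rm d} = \emptyset$ then $D = 0$, so $L = jR$ has purely imaginary spectrum, whence $\mathrm{Re}\,\lambda_2(D+jR) = 0$ and the interconnection is not SS — this handles necessity of $\B_{\rm d}\neq\emptyset$. For the converse (sufficiency), assuming $(\V,\B_{\rm d}\cup\B_{\rm r})$ connected and $\B_{\rm d}\neq\emptyset$: by the reduction above, for \emph{any} admissible $D\in\mathrm{lap}(\V,\B_{\rm d})$ and $R\in\mathrm{lap}(\V,\B_{\rm r})$ we have $\ker D\cap\ker R = \mathrm{span}\{\one_q\}$, hence $\mathrm{Re}\,\lambda_2 > 0$, so \emph{some} (indeed every) choice works and the interconnection is SS. The main obstacle I anticipate is the spectral reduction in the first paragraph: showing that an eigenvalue on the imaginary axis must in fact be $0$ and must force $v$ into the common kernel. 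The subtlety is that $j R$ is not self-adjoint, so one cannot directly diagonalize; the argument instead leans on the real-part quadratic form $v^*Dv$ being a genuine nonnegative quantity that vanishes, then substituting back to analyze $jRv = \lambda v$ — here I would use that $v^*(jRv) = \lambda v^*v$ has zero real part forcing $\lambda$ purely imaginary is consistent, but pinning $\lambda = 0$ needs $v^* R v = \mathrm{Im}(\lambda)\|v\|^2$ together with $Rv = \lambda v / j$ being real-proportional... a short additional argument (e.g. taking real and imaginary parts of $v$ and using $R\succeq 0$) closes it. Everything after that reduction is standard incidence-matrix bookkeeping.
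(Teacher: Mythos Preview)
Your spectral reduction is incorrect, and the error invalidates the entire argument. You claim that an eigenvalue $\lambda$ of $D+jR$ with $\mathrm{Re}\,\lambda=0$ must satisfy $\lambda=0$ and have its eigenvector in $\ker D\cap\ker R$. The first step is fine: $v^{*}Dv=0$ gives $Dv=0$, hence $jRv=\lambda v$. But writing $\lambda=j\sigma$ with $\sigma\in\Real$ yields $Rv=\sigma v$, i.e.\ $v$ is an eigenvector of $R$ with eigenvalue $\sigma$; nothing forces $\sigma=0$. Your proposed ``short additional argument'' cannot exist, because the conclusion is false. The correct reduction (the paper's Lemma~\ref{fact:two}) is that $\mathrm{Re}\,\lambda_{2}(D+jR)>0$ iff $R$ has no eigenvector in $\ker D\setminus\mathrm{span}\{\one_{q}\}$, a condition that genuinely depends on the edge weights in $R$, not only on $\ker R$.

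This is not a technicality. Your reduction, if valid, would show that $\mathrm{Re}\,\lambda_{2}(D+jR)>0$ holds for \emph{every} admissible $(D,R)$ once $(\V,\B_{\rm d}\cup\B_{\rm r})$ is connected and $\B_{\rm d}\neq\emptyset$; i.e.\ SS would imply SSS. The paper's Example~1 ($q=4$, $\B_{\rm d}=\{\{\nu_{1},\nu_{3}\}\}$, $\B_{\rm r}=\{\{\nu_{1},\nu_{2}\},\{\nu_{2},\nu_{3}\},\{\nu_{3},\nu_{4}\}\}$) is a direct counterexample: here $\ker D\cap\ker R=\mathrm{span}\{\one_{4}\}$ for all weights, yet for a suitable choice of restorative weights the vector $v=(2,-3,2,-1)^{T}$ is an eigenvector of $R$ lying in $\ker D\setminus\mathrm{span}\{\one_{4}\}$, forcing $\mathrm{Re}\,\lambda_{2}=0$. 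With the correct reduction, the sufficiency direction becomes the nontrivial part: one must \emph{construct} a specific $R$ whose eigenvectors avoid $\ker D\setminus\mathrm{span}\{\one_{q}\}$. The paper does this via a perturbation argument built on Lemma~\ref{lem:five} (every connected graph admits a Laplacian none of whose eigenvectors has a zero entry); your proposal contains no substitute for this construction.
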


Throughout the rest of this section we will either be proving this
result or be busy developing tools (three lemmas) for the proof.
Given the simplicity of the statement to be proven, our
demonstration seems to be unnecessarily lengthy. It is not unlikely
that there is a much shorter (and elegant) proof, but we have so far
been unable to discover it.

\begin{lemma}\label{lem:five}
Let $(\V,\,\B)$ be a connected graph. Then there exists a laplacian
$L\in{\rm lap}\,(\V,\,\B)$ such that no eigenvector of $L$ has a
zero entry.
\end{lemma}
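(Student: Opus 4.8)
The plan is to construct the desired laplacian $L = G\Lambda G^T$ by choosing the edge weights $w_1,\dots,w_p$ generically, so that no eigenvector of $L$ lies in any of the coordinate hyperplanes $\{\eta : \eta_k = 0\}$. Since $(\V,\,\B)$ is connected, $G^T$ has rank $q-1$, and $\one_q$ spans the kernel of every $L \in {\rm lap}\,(\V,\,\B)$; note $\one_q$ already has no zero entry, so the only danger comes from eigenvectors associated with nonzero eigenvalues. A natural strategy: first produce a weighting for which $L$ has $q-1$ distinct positive eigenvalues (so the nonzero eigenspaces are all one-dimensional and the eigenvectors are, up to scaling, uniquely determined), then argue that a further generic perturbation of the weights keeps the spectrum simple while forcing each eigenvector off every coordinate hyperplane.

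The key steps, in order, would be: \textbf{(i)} Parametrize ${\rm lap}\,(\V,\,\B)$ by the open positive orthant $w \in \Real_{>0}^p$ and regard $L(w) = G\,{\rm diag}(w)\,G^T$ as a real-analytic (indeed polynomial) matrix-valued function of $w$. \textbf{(ii)} Show the set of $w$ for which $L(w)$ has a repeated eigenvalue is contained in a proper real-analytic subvariety of $\Real^p$ — equivalently, exhibit a single $w^\star$ giving $q-1$ distinct nonzero eigenvalues (for instance by a perturbation/continuity argument from a convenient starting weighting, or by invoking that a connected graph admits a weighting with simple Laplacian spectrum). \textbf{(iii)} On the open dense set where the spectrum is simple, each nonzero eigenvalue $\lambda_i(w)$ and a corresponding unit eigenvector $v_i(w)$ depend real-analytically on $w$ (standard perturbation theory for simple eigenvalues). \textbf{(iv)} For each $i \in \{2,\dots,q\}$ and each coordinate $k \in \{1,\dots,q\}$, consider the real-analytic function $w \mapsto \langle e_k, v_i(w)\rangle$; show it is not identically zero on the simple-spectrum region, so its zero set is a proper subvariety. \textbf{(v)} The finite union over $(i,k)$ of these subvarieties, together with the non-simple-spectrum subvariety, is a proper closed subset of $\Real_{>0}^p$ with empty interior, so its complement is nonempty; any $w$ in that complement yields the desired $L$.

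The main obstacle is step \textbf{(iv)}: ruling out that some eigenvector entry $\langle e_k, v_i(w)\rangle$ vanishes identically as $w$ ranges over all admissible weightings. If it did vanish identically, then for every weighting the $i$th eigenvector would be supported off vertex $\nu_k$; I would derive a contradiction with connectivity — e.g., by picking weightings that make one edge incident to $\nu_k$ extremely stiff (sending its weight to infinity or to zero) and tracking how the eigenvector localizes, showing that $\nu_k$ cannot be avoided by \emph{all} of the $q-1$ nontrivial eigenvectors simultaneously for \emph{all} weightings. An alternative route that sidesteps the analyticity bookkeeping is a direct dimension count: the set of $(L, v)$ with $L \in {\rm lap}\,(\V,\,\B)$, $v$ a unit eigenvector, and $v_k = 0$ for some $k$ cuts down dimension by one for each such constraint, and comparing with the dimension $p$ of the weight space (using $p \ge q-1$ from connectivity and a careful accounting of multiplicities) shows a generic $w$ escapes all of them; step \textbf{(ii)}'s reduction to the simple-spectrum case is what makes this count clean, since there the map $w \mapsto (\lambda_i(w), v_i(w))$ is well-defined and smooth.
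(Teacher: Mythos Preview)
Your approach is genuinely different from the paper's. The paper proceeds by explicit edge-by-edge induction: starting from a two-vertex graph, it adds one edge at a time with a sufficiently small weight, using eigenvalue perturbation bounds to verify that no eigenvector acquires a zero entry at any stage. Your genericity strategy would, if completed, yield the stronger conclusion that the desired property holds for almost every weighting. But step \textbf{(iv)} is not a technicality to be cleaned up afterward---it is the entire content of the lemma, and neither of your proposed resolutions closes it.

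The dimension count is circular. Asserting that the constraint $v_k=0$ ``cuts down dimension by one'' presupposes that $v_k$ is not identically zero on the relevant irreducible component of the incidence variety; on the simple-spectrum locus those components are precisely the eigenvalue branches, so this presupposition is exactly step \textbf{(iv)}. Put differently: the bad set $\{w: L(w)\ \text{has an eigenvector with a zero entry}\}$ is already a real-algebraic subvariety of $\Real^p$, hence either it is all of $\Real_{>0}^p$ or it has empty interior---genericity alone cannot tell you which. What genericity would buy you is the freedom to exhibit a \emph{separate} witness for each pair $(i,k)$ rather than a single $w$ working simultaneously for all pairs; but you have produced no witness at all. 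Your limiting argument is too vague to assess: sending the weight of an edge incident to $\nu_k$ to infinity contracts that edge, sending it to zero may disconnect the graph, and in neither limit is it evident why the $k$th entry of a fixed eigenvector branch should be nonzero. (Step \textbf{(ii)} has a milder version of the same issue: you need at least one $w$ with simple spectrum to get started, and ``invoking that a connected graph admits such a weighting'' is, absent a reference or argument, again assuming what is to be shown.) A sharper reformulation of \textbf{(iv)} via the eigenvector--eigenvalue identity reduces the task to showing, for each vertex $k$, that some weighting makes $L$ and its $k$th principal submatrix share no eigenvalue; but establishing that still requires a construction, and the most natural one is an induction on the graph---which is what the paper carries out directly.
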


\begin{proof}
We prove by induction. Given some connected graph
$\Gamma=(\V,\,\B)$, suppose $L\in{\rm
lap}\,(\V,\,\B)\subset\Real^{q\times q}$ has no eigenvector with
zero entry. This means (being real and symmetric) $L$ has distinct
eigenvalues, which we denote by
$\sigma_{1},\,\sigma_{2},\,\ldots,\,\sigma_{q}$. Moreover, the
corresponding unit eigenvectors
$u_{1},\,u_{2},\,\ldots,\,u_{q}\in\Real^{q}$ are pairwise
orthogonal. Let us define the positive constants $c_{1}$ and $c_{2}$
as
\begin{eqnarray*}
c_{1}&:=&\min_{i\neq j}|\sigma_{i}-\sigma_{j}|\,,\\
c_{2}&:=&\min_{i,j}|e_{i}^{T}u_{j}|\,.
\end{eqnarray*}
Let $\V=\{\nu_{1},\,\nu_{2},\,\ldots,\,\nu_{q}\}$. Now we augment
the graph $\Gamma$ by adding a new edge
$\{\nu_{k},\,\nu_{\ell}\}\notin\B$. We consider two possibilities
that preserve connectedness.

{\em Case~1: $\nu_{k},\,\nu_{\ell}\in\V$.} In this case the new edge
is between two already existing vertices. Let
$\B^{+}=\B\cup\{\{\nu_{k},\,\nu_{\ell}\}\}$. The new graph
$\Gamma^{+}=(\V,\,\B^{+})$ clearly is connected. Define
$b\in\Real^{q}$ as $b=e_{k}-e_{\ell}$. Note that $\|bb^{T}\|=2$,
where we work with the (induced) 2-norm. For the new edge choose now
some weight $w>0$ satisfying
\begin{eqnarray}\label{eqn:w}
w<\frac{c_{1}c_{2}}{8\sqrt{1+c_{2}^{2}}}
\end{eqnarray}
and construct the laplacian
\begin{eqnarray*}
L_{1}=L+w bb^{T}\,.
\end{eqnarray*}
Note that $L_{1}\in{\rm lap}\,(\V,\,\B^{+})$. Let now
$u\in\Real^{q}$ be a unit eigenvector of $L_{1}$. By
\cite[Cor.~8.1.6]{golub96} we have $L_{1}u=(\sigma+h)u$ for some
$\sigma\in\{\sigma_1,\,\sigma_2,\,\ldots,\,\sigma_q\}$ and $|h|\leq
\|wbb^{T}\|=w\|bb^{T}\|=2w$. Without loss of generality we take
$\sigma=\sigma_{1}$, i.e., $L_{1}u=(\sigma_{1}+h)u$. Let
$\alpha_{i}=u_{i}^{T}u$. Since $\{u_{1},\,u_{2},\,\ldots,\,u_{q}\}$
is an orthonormal basis for $\Real^{q}$ we have
$u=\sum_{i=1}^{q}\alpha_{i}u_{i}$. Moreover, $\|u\|=1$ implies
$\sum_{i=1}^{q}\alpha_{i}^{2}=1$. We can write
\begin{eqnarray*}
\left\|\sum_{i=2}^{q}\alpha_{i}(\sigma_{1}-\sigma_{i}+h)u_{i}\right\|
&=&\left\|-h\alpha_{1}u_{1}+
(\sigma_{1}+h)\sum_{i=1}^{q}\alpha_{i}u_{i}-\sum_{i=1}^{q}\alpha_{i}\sigma_{i}u_{i}\right\|\\
&=&\left\|-h\alpha_{1}u_{1}+
(\sigma_{1}+h)\sum_{i=1}^{q}\alpha_{i}u_{i}-L\sum_{i=1}^{q}\alpha_{i}u_{i}\right\|\\
&=&\left\|-h\alpha_{1}u_{1}+ (\sigma_{1}+h)u-\left[L_{1}-wbb^{T}\right]u\right\|\\
&=&\left\|-h\alpha_{1}u_{1}+ L_{1}u-L_{1}u+wbb^{T}u\right\|\\
&\leq&|h|\cdot|\alpha_{1}|\cdot\|u_{1}\|+w\left\|bb^{T}\right\|\cdot\|u\|\\
&\leq&4w
\end{eqnarray*}
since $|\alpha_{1}|\leq 1$ and $\|u_{1}\|=1$. Now, by \eqref{eqn:w}
we have $w<c_{1}/4$ yielding $|h|\leq c_{1}/2$. Therefore
\begin{eqnarray*} \sum_{i=2}^{q}\alpha_{i}^{2}
=\left\|\sum_{i=2}^{q}\alpha_{i}u_{i}\right\|^{2}
=\frac{4}{c_{1}^{2}}\left\|\sum_{i=2}^{q}\alpha_{i}\frac{c_{1}}{2}u_{i}\right\|^{2}
\leq\frac{4}{c_{1}^{2}}\left\|\sum_{i=2}^{q}\alpha_{i}(\sigma_{1}-\sigma_{i}+h)u_{i}\right\|^{2}\leq
\frac{64w^{2}}{c_{1}^{2}}\,.
\end{eqnarray*}
This allows us to write for all $j\in\{1,\,2,\,\ldots,\,q\}$
\begin{eqnarray}\label{eqn:almosthere}
|e_{j}^{T}u| &=&\left|e_{j}^{T}\sum_{i=1}^{q}\alpha_{i}u_{i}\right|
\geq\left|\alpha_{1}e_{j}^{T}u_{1}\right|-\left|e_{j}^{T}\sum_{i=2}^{q}\alpha_{i}u_{i}\right|\nonumber\\
&\geq&|\alpha_{1}|\cdot\left|e_{j}^{T}u_{1}\right|-\|e_{j}\|\cdot\left\|\sum_{i=2}^{q}\alpha_{i}u_{i}\right\|
\geq c_{2}\left(1-\sum_{i=2}^{q}\alpha_{i}^{2}\right)^{1/2}-\left(\sum_{i=2}^{q}\alpha_{i}^{2}\right)^{1/2}\nonumber\\
&\geq&c_{2}\left(1-\frac{64w^{2}}{c_{1}^{2}}\right)^{1/2}-\frac{8w}{c_{1}}\,.
\end{eqnarray}
Combining \eqref{eqn:w} and \eqref{eqn:almosthere} yields
$|e_{j}^{T}u|>0$ meaning $e_{j}^{T}u\neq 0$, i.e., the eigenvector
$u$ has no zero entry.

{\em Case~2: $\nu_{k}\in\V$, $\nu_{\ell}=\nu_{q+1}\notin\V$.} In
this case the new edge is between an existing vertex $\nu_{k}$ and a
new vertex $\nu_{q+1}$. Let $\V^{+}=\V\cup\{\nu_{q+1}\}$ and
$\B^{+}=\B\cup\{\{\nu_{k},\,\nu_{q+1}\}\}$. Since $\nu_{k}\in\V$ the
new graph $\Gamma^{+}=(\V^{+},\,\B^{+})$ inherits the connectedness
of $\Gamma$. Let $b\in\Real^{q+1}$ satisfy $b=e_{k}-e_{q+1}$
yielding $\|bb^{T}\|=2$. For the new edge choose now some weight
$w>0$ satisfying
\begin{eqnarray}\label{eqn:w2}
w<\frac{c_{1}c_{2}}{8\sqrt{1+q+c_{2}^{2}}}
\end{eqnarray}
and construct the laplacian
\begin{eqnarray*}
L_{2}=\underbrace{\left[\begin{array}{cc}L&0\\0&0\end{array}\right]}_{\displaystyle
\tilde L}+w bb^{T}\,.
\end{eqnarray*}
Note that $L_{2}\in{\rm lap}\,(\V^{+},\,\B^{+})$. Since $\tilde L$
is block diagonal, its eigenvalues are contained in the union
$\{\sigma_{1},\,\sigma_{2},\,\ldots,\,\sigma_{q}\}\cup\{0\}$. Being
a laplacian matrix, $L$ already has an eigenvalue at the origin.
Therefore $L$ and $\tilde L$ share the same eigenvalues. Let now
$u\in\Real^{q+1}$ be a unit eigenvector of $L_{2}$. We invoke once
again \cite[Cor.~8.1.6]{golub96} and write (without loss of
generality) $L_{2}u=(\sigma_{1}+h)u$ for some $|h|\leq 2w$. Let
$\alpha_{1},\,\alpha_{2},\,\ldots,\,\alpha_{q+1}\in\Real$ be such
that
\begin{eqnarray*}
u=\left[\begin{array}{c}\sum_{i=1}^{q}\alpha_{i}u_{i}\\
\alpha_{q+1}\end{array}\right]\,.
\end{eqnarray*}
Since $\|u\|=1$ we have $\sum_{i=1}^{q+1}\alpha_{i}^{2}=1$. Note
first that $\alpha_{q+1}$ cannot be zero. To see that suppose
otherwise. That is, $u = [\sum_{i=1}^{q}\alpha_{i}u_{i}^{T}\ \
0]^{T}$. We could then write
\begin{eqnarray*}
0=(\sigma_{1}+h)e_{q+1}^{T}u=e_{q+1}^{T}\left(\left[\begin{array}{cc}L&0\\0&0\end{array}\right]+w
bb^{T}\right)u=w(e_{q+1}^{T}b)(b^{T}u)
\end{eqnarray*}
which would imply (since $w>0$ and $e_{q+1}^{T}b=-1$) that
$b^{T}u=0$ yielding $e_{k}^{T}\sum_{i=1}^{q}\alpha_{i}u_{i}=0$,
i.e., the $k$th entry of the vector $\sum_{i=1}^{q}\alpha_{i}u_{i}$
is zero. But this would be a contradiction because $b^{T}u=0$
implies that $\sum_{i=1}^{q}\alpha_{i}u_{i}$ has to be an
eigenvector of $L$ (which has no eigenvector with a zero entry) as
can be seen through
\begin{eqnarray*}
\left[\begin{array}{c}(\sigma_{1}+h)\sum_{i=1}^{q}\alpha_{i}u_{i}\\
0\end{array}\right]=(\sigma_{1}+h)u=\left(\left[\begin{array}{cc}L&0\\0&0\end{array}\right]+w
bb^{T}\right)u=\left[\begin{array}{c}L\sum_{i=1}^{q}\alpha_{i}u_{i}\\
0\end{array}\right]\,.
\end{eqnarray*}
Having established $\alpha_{q+1}\neq 0$ let us continue by writing
\begin{eqnarray}\label{eqn:leq4w}
\left\|\left[\begin{array}{c}\sum_{i=2}^{q}\alpha_{i}(\sigma_{1}-\sigma_{i}+h)u_{i}\\(\sigma_{1}+h)\alpha_{q+1}\end{array}\right]\right\|
&=&\left\|-\left[\begin{array}{c}h\alpha_{1}u_{1}\\
0\end{array}\right] +
(\sigma_{1}+h)\left[\begin{array}{c}\sum_{i=1}^{q}\alpha_{i}u_{i}\\
\alpha_{q+1}\end{array}\right]
-\left[\begin{array}{c}\sum_{i=1}^{q}\alpha_{i}\sigma_{i}u_{i}\\ 0\end{array}\right]\right\|\nonumber\\
&=&\left\|-\left[\begin{array}{c}h\alpha_{1}u_{1}\\
0\end{array}\right]+
(\sigma_{1}+h)u-\tilde Lu\right\|\nonumber\\
&=&\left\| -\left[\begin{array}{c}h\alpha_{1}u_{1}\\
0\end{array}\right] + L_{2}u-\left[L_{2}-wbb^{T}\right]u\right\|\nonumber\\
&=&\left\| -\left[\begin{array}{c}h\alpha_{1}u_{1}\\
0\end{array}\right] + wbb^{T}u\right\|\nonumber\\
&\leq&|h|\cdot|\alpha_{1}|\cdot\|u_{1}\|+w\left\|bb^{T}\right\|\cdot\|u\|\nonumber\\
&\leq&4w\,.
\end{eqnarray}
We now study two subcases. {\em Subcase~2.1: $\sigma_{1}>0$.} Note
that since $0\in\{\sigma_{1},\,\sigma_{2},\,\ldots,\,\sigma_{q}\}$
we have $\sigma_{1}\geq c_{1}$. By \eqref{eqn:leq4w} we can
therefore write
\begin{eqnarray*} \sum_{i=2}^{q+1}\alpha_{i}^{2}
=\left\| \left[\begin{array}{c}\sum_{i=2}^{q}\alpha_{i}u_{i}\\
\alpha_{q+1}\end{array}\right] \right\|^{2} =\frac{4}{c_{1}^{2}}\left\| \left[\begin{array}{c}\sum_{i=2}^{q}\frac{c_{1}}{2}\alpha_{i}u_{i}\\
\frac{c_{1}}{2}\alpha_{q+1}\end{array}\right] \right\|^{2} \leq
\frac{4}{c_{1}^{2}}\left\|\left[\begin{array}{c}\sum_{i=2}^{q}\alpha_{i}(\sigma_{1}-\sigma_{i}+h)u_{i}\\(\sigma_{1}+h)\alpha_{q+1}\end{array}\right]\right\|^{2}
\leq \frac{64w^{2}}{c_{1}^{2}}\,.
\end{eqnarray*}
This lets us have for all $j\in\{1,\,2,\,\ldots,\,q\}$
\begin{eqnarray}\label{eqn:atmosphere}
|e_{j}^{T}u| &=&\left|e_{j}^{T}\sum_{i=1}^{q}\alpha_{i}u_{i}\right|
\geq\left|\alpha_{1}e_{j}^{T}u_{1}\right|-\left|e_{j}^{T}\sum_{i=2}^{q}\alpha_{i}u_{i}\right|\nonumber\\
&\geq&|\alpha_{1}|\cdot\left|e_{j}^{T}u_{1}\right|-\|e_{j}\|\cdot\left\|\sum_{i=2}^{q}\alpha_{i}u_{i}\right\|
\geq c_{2}\left(1-\sum_{i=2}^{q+1}\alpha_{i}^{2}\right)^{1/2}-\left(\sum_{i=2}^{q}\alpha_{i}^{2}\right)^{1/2}\nonumber\\
&\geq&c_{2}\left(1-\frac{64w^{2}}{c_{1}^{2}}\right)^{1/2}-\frac{8w}{c_{1}}\,.
\end{eqnarray}
Combining \eqref{eqn:w2} and \eqref{eqn:atmosphere} yields
$|e_{j}^{T}u|>0$ meaning $e_{j}^{T}u\neq 0$, i.e., the first $q$
entries of the eigenvector $u$ are nonzero. Recall that the last
entry $\alpha_{q+1}$ is nonzero, too. Therefore $u$ has no zero
entry. {\em Subcase~2.2: $\sigma_{1}=0$.} When $\sigma_{1}=0$ we
have $L_{2}u=hu$. That $L_{2}$ is symmetric positive semidefinite
implies that the eigenvalue $h$ is nonnegative. It turns out that
the two possibilities ($h>0$ or $h=0$) require slightly different
treatments. We begin by $h>0$. Recall that $L_{2}\one_{q+1}=0$.
Since $h\neq 0$ and $L_{2}$ is symmetric, the eigenvectors $u$ and
$\one_{q+1}$ have to be orthogonal. Moreover, $\sigma_{1}=0$ yields
$u_{1}=\one_{q}/\sqrt{q}$ since we know that $L\one_{q}=0$ and the
eigenvalue of $L$ at the origin cannot be repeated. Thence we can
write $\one_{q}^{T}u_{i}=0$ for $i=2,\,3,\,\ldots,\,q$. This implies
$\alpha_{q+1}^{2}=q\alpha_{1}^{2}$ because
\begin{eqnarray*}
\sqrt{q}\alpha_{1}+\alpha_{q+1}
=(\one_{q}^{T}u_{1})\alpha_{1}+\alpha_{q+1}+\sum_{i=2}^{q}\alpha_{i}(\one_{q}^{T}u_{i})
=\alpha_{q+1}+\one_{q}^{T}\sum_{i=1}^{q}\alpha_{i}u_{i}
=\one_{q+1}^{T}u =0\,.
\end{eqnarray*}
Therefore
\begin{eqnarray}\label{eqn:moveon}
\alpha_{1}^{2}=\frac{\alpha_{1}^{2}+\alpha_{q+1}^{2}}{q+1}=\frac{1-\sum_{i=2}^{q}\alpha_{i}^{2}}{q+1}\,.
\end{eqnarray}
Now, by \eqref{eqn:leq4w} we can write
\begin{eqnarray*} \sum_{i=2}^{q}\alpha_{i}^{2}
=\left\|\sum_{i=2}^{q}\alpha_{i}u_{i} \right\|^{2}
=\frac{4}{c_{1}^{2}}\left\|
\sum_{i=2}^{q}\frac{c_{1}}{2}\alpha_{i}u_{i}\right\|^{2} \leq
\frac{4}{c_{1}^{2}}\left\|\left[\begin{array}{c}\sum_{i=2}^{q}\alpha_{i}(\sigma_{1}-\sigma_{i}+h)u_{i}\\(\sigma_{1}+h)\alpha_{q+1}\end{array}\right]\right\|^{2}
\leq \frac{64w^{2}}{c_{1}^{2}}\,.
\end{eqnarray*}
This and \eqref{eqn:moveon} allow us write for all
$j\in\{1,\,2,\,\ldots,\,q\}$
\begin{eqnarray}\label{eqn:atmosthere}
|e_{j}^{T}u| &=&\left|e_{j}^{T}\sum_{i=1}^{q}\alpha_{i}u_{i}\right|
\geq\left|\alpha_{1}e_{j}^{T}u_{1}\right|-\left|e_{j}^{T}\sum_{i=2}^{q}\alpha_{i}u_{i}\right|\nonumber\\
&\geq&|\alpha_{1}|\cdot\left|e_{j}^{T}u_{1}\right|-\|e_{j}\|\cdot\left\|\sum_{i=2}^{q}\alpha_{i}u_{i}\right\|
\geq \frac{c_{2}}{\sqrt{q+1}}\left(1-\sum_{i=2}^{q}\alpha_{i}^{2}\right)^{1/2}-\left(\sum_{i=2}^{q}\alpha_{i}^{2}\right)^{1/2}\nonumber\\
&\geq&\frac{c_{2}}{\sqrt{q+1}}\left(1-\frac{64w^{2}}{c_{1}^{2}}\right)^{1/2}-\frac{8w}{c_{1}}\,.
\end{eqnarray}
Combining \eqref{eqn:w2} and \eqref{eqn:atmosthere} yields
$|e_{j}^{T}u|>0$ whence follows that the first $q$ entries of $u$
are nonzero. Since we also have $\alpha_{q+1}\neq 0$, the
eigenvector $u$ is free of any zero entry. Finally, we deal with the
other case $h=0$. That is, $L_{2}u=0$. Since the laplacian $L_{2}$
represents a connected graph, ${\rm dim}\,{\rm null}\,L_{2}=1$,
whence the eigenvector corresponding to the eigenvalue at the origin
is unique (up to a scaling). More precisely,
$u=\one_{q+1}/\sqrt{q+1}$, meaning $u$ has no zero entry.

To recapitulate, we have established the following. If for a given
connected graph $\Gamma=(\V,\,\B)$ there exists some laplacian
$L\in{\rm lap}\,(\V,\,\B)$ such that no eigenvector of $L$ has a
zero entry, then for any connected graph $\Gamma^{+}$ that can be
constructed from $\Gamma$ by adding a new edge, there must also
exist a laplacian with the same property. Since (i) any connected
graph can be gradually constructed in this edge-by-edge fashion
starting from a single edge connected graph
$\Gamma_{0}=(\{\nu_{1},\,\nu_{2}\},\,\{\{\nu_{1},\,\nu_{2}\}\})$ and
(ii) the eigenvectors of any laplacian $L_{0}\in{\rm
lap}\,\Gamma_{0}$ are
\begin{eqnarray*}
u_{1}=\left[\begin{array}{r}\frac{1}{\sqrt{2}}\\
\frac{1}{\sqrt{2}}\end{array}\right]\quad\mbox{and}\quad u_{2}=\left[\begin{array}{r}\frac{1}{\sqrt{2}}\\
-\frac{1}{\sqrt{2}}\end{array}\right]
\end{eqnarray*}
(i.e., they have no zero entries) the result follows by induction.
\end{proof}

\begin{lemma}\label{fact:one}
Let $D,\,R\in\Real^{q\times q}$ be symmetric positive semidefinite
matrices. Then the complex matrix $D+jR$ has no eigenvalue on the
open left half plane.
\end{lemma}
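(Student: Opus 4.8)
The plan is to argue directly via the Hermitian quadratic forms associated with an eigenvector, i.e., through the field of values of $D+jR$. Suppose $\lambda\in\Complex$ is an eigenvalue of $D+jR$ and let $v\in\Complex^{q}$ be a corresponding eigenvector, normalized so that $v^{*}v=1$, where $v^{*}$ denotes the conjugate transpose. Left-multiplying the identity $(D+jR)v=\lambda v$ by $v^{*}$ yields
\begin{eqnarray*}
\lambda = v^{*}(D+jR)v = v^{*}Dv + j\,v^{*}Rv\,.
\end{eqnarray*}

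The key observation is that, since $D\in\Real^{q\times q}$ is symmetric, it is Hermitian, so $v^{*}Dv$ is a real number; positive semidefiniteness of $D$ then gives $v^{*}Dv\geq 0$. Applying the same reasoning to $R$ shows that $v^{*}Rv$ is real and nonnegative as well. Consequently $\mathrm{Re}\,\lambda = v^{*}Dv \geq 0$, so $\lambda$ cannot lie strictly to the left of the imaginary axis. Since $\lambda$ was an arbitrary eigenvalue, the claim follows. (Incidentally, this also pins down $\lambda_{1}(D+jR)=0$ with eigenvector $\one_{q}$, since $D\one_{q}=R\one_{q}=0$; thus the content of Proposition~\ref{prop:one} is entirely in whether the \emph{second} eigenvalue has strictly positive real part.)

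I do not expect a genuine obstacle here: the only point requiring care is to form the quadratic form with the conjugate transpose rather than the plain transpose, so that the identities $v^{*}Dv\in\Real$ and $v^{*}Rv\in\Real$ remain valid for complex eigenvectors $v$; once that is in place, the positive semidefiniteness hypotheses do all the work. An equivalent phrasing, should a more geometric statement be preferred, is that the field of values $\{v^{*}(D+jR)v : v^{*}v=1\}$ is contained in the closed right half plane because its real part equals $v^{*}Dv\ge 0$, and every eigenvalue of a matrix lies in its field of values.
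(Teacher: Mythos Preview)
Your proof is correct and is essentially the same as the paper's: both take a unit eigenvector $v$, form $\lambda=v^{*}(D+jR)v=v^{*}Dv+j\,v^{*}Rv$, note that the two quadratic forms are real, and conclude ${\rm Re}\,\lambda=v^{*}Dv\geq 0$ from positive semidefiniteness. One small caveat: your parenthetical about $\one_{q}$ invokes $D\one_{q}=R\one_{q}=0$, which is not among the hypotheses of this lemma (it holds in the paper's intended application to laplacians, but the lemma itself assumes only symmetry and positive semidefiniteness).
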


\begin{proof}
Let $\lambda\in\Complex$ be an eigenvalue of $D+jR$ with the
corresponding unit eigenvector $u\in\Complex^{q}$. We can write
${\rm Re}\,\lambda={\rm Re}\,(u^{*}\lambda u)={\rm
Re}\,(u^{*}[D+jR]u)={\rm Re}\,(u^{*}Du+ju^{*}Ru)=u^{*}Du\geq0$.
Hence the result.
\end{proof}

\begin{lemma}\label{fact:two}
Let $D,\,R\in\Real^{q\times q}$ be symmetric positive semidefinite
matrices satisfying $D\one_{q}=R\one_{q}=0$. Then ${\rm
Re}\,\lambda_{2}(D+jR)>0$ if and only if $R$ has no eigenvector in
the set ${\rm null}\,D\setminus{\rm span}\,\{\one_{q}\}$.
\end{lemma}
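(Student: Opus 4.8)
The plan is to recast both sides of the claimed equivalence as a statement about the imaginary-axis eigenvalues of $D+jR$, and then prove that reformulation.

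First I would isolate the following fact, essentially the computation behind Lemma~\ref{fact:one}: if $u\in\Complex^{q}$ is a unit eigenvector of $D+jR$ with eigenvalue $\lambda$, then ${\rm Re}\,\lambda=u^{*}Du$. Since $D$ is positive semidefinite this gives ${\rm Re}\,\lambda=0$ iff $Du=0$, in which case $(D+jR)u=jRu=\lambda u$, so writing $\lambda=j\mu$ with $\mu\in\Real$ we obtain $Ru=\mu u$; conversely, any eigenvector $v$ of $R$ lying in ${\rm null}\,D$, say $Rv=\mu v$, satisfies $(D+jR)v=j\mu v$. Hence an eigenvector of $D+jR$ is associated with a purely imaginary eigenvalue exactly when it lies in ${\rm null}\,D$ and is an eigenvector of $R$, and (taking real and imaginary parts) the real and imaginary parts of such a complex eigenvector are themselves real eigenvectors of $R$ in ${\rm null}\,D$ for the same real eigenvalue. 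Since $D\one_{q}=R\one_{q}=0$ yields $(D+jR)\one_{q}=0$ and, by Lemma~\ref{fact:one}, ${\rm Re}\,\lambda_{k}(D+jR)\geq 0$ for every $k$, we have ${\rm Re}\,\lambda_{1}=0$. Therefore ${\rm Re}\,\lambda_{2}>0$ is equivalent to: $0$ is the only eigenvalue of $D+jR$ on the imaginary axis and its algebraic multiplicity is one. What remains is to show this is equivalent to every eigenvector of $R$ lying in ${\rm null}\,D$ being a scalar multiple of $\one_{q}$.

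One direction is immediate: if $Rv=\mu v$ with $v\in{\rm null}\,D$ and $v\notin{\rm span}\{\one_{q}\}$, then $\one_{q}$ and $v$ are linearly independent eigenvectors of $D+jR$ with purely imaginary eigenvalues $0$ and $j\mu$. If $\mu\neq0$ these are two distinct imaginary-axis eigenvalues; if $\mu=0$ then $0$ has geometric, hence algebraic, multiplicity at least two. In either case ${\rm Re}\,\lambda_{2}=0$, so ${\rm Re}\,\lambda_{2}>0$ fails.

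For the converse I would argue by contraposition: assuming ${\rm Re}\,\lambda_{2}=0$, at least two eigenvalues of $D+jR$ (with algebraic multiplicity) lie on the imaginary axis, and I must extract from this an eigenvector of $R$ in ${\rm null}\,D\setminus{\rm span}\{\one_{q}\}$. I expect the main obstacle here: $D+jR$ is complex symmetric but not Hermitian, so algebraic and geometric multiplicities need not coincide, and two eigenvalues on the axis need not by themselves supply two eigenvectors. I would remove the obstacle by showing that no imaginary-axis eigenvalue admits a Jordan chain of length two: if $(D+jR)u=\lambda u$ and $(D+jR)w=\lambda w+u$ with ${\rm Re}\,\lambda=0$, then using $Du=0$ and $Ru=\mu u$ (with $\mu\in\Real$) from the first step together with the symmetry of $D$ and $R$, left-multiplying the second identity by $u^{*}$ and cancelling the terms in $w$ leaves $0=u^{*}u$, contradicting $u\neq0$. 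Hence algebraic and geometric multiplicities agree for imaginary-axis eigenvalues, so ${\rm Re}\,\lambda_{2}=0$ produces two $\Complex$-linearly independent eigenvectors $u_{1},u_{2}$ of $D+jR$ with purely imaginary eigenvalues. Their real and imaginary parts then span an $\Real$-subspace of dimension at least two (because $u_{1},u_{2}$ are $\Complex$-independent), every nonzero member of which is a real eigenvector of $R$ in ${\rm null}\,D$; choosing two $\Real$-independent such vectors, at least one lies off the line ${\rm span}\{\one_{q}\}$, which is exactly what is needed.
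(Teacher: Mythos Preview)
Your argument is correct and follows essentially the same route as the paper: both directions are treated by contraposition, both rely on Lemma~\ref{fact:one}, and both dispose of the potential Jordan block on the imaginary axis by pairing the generalized-eigenvector relation against the eigenvector. The paper only needs (and only proves) semisimplicity at the eigenvalue $0$ with bottom eigenvector $\one_{q}$, using the transpose symmetry $(D+jR)^{T}=D+jR$ and the realness of $\one_{q}$; your version, using $u^{*}$ together with $Du=0$ and $R\bar u=\mu\bar u$, shows the stronger fact that \emph{every} imaginary-axis eigenvalue of $D+jR$ is semisimple, which is a pleasant generalization.

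One small slip in your last paragraph: the claim that ``every nonzero member'' of the real span of ${\rm Re}\,u_{1},{\rm Im}\,u_{1},{\rm Re}\,u_{2},{\rm Im}\,u_{2}$ is an eigenvector of $R$ is false when $u_{1}$ and $u_{2}$ correspond to distinct $\mu_{1}\neq\mu_{2}$ (a sum of eigenvectors for different eigenvalues is not an eigenvector). The fix is immediate and already implicit in what you wrote: each \emph{individual} real or imaginary part, if nonzero, is a real eigenvector of $R$ in ${\rm null}\,D$; since these four vectors cannot all lie on a single real line (else $u_{1},u_{2}$ would be $\Complex$-dependent), at least one of them lies outside ${\rm span}\{\one_{q}\}$, and that one is the eigenvector you need.
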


\begin{proof}
Suppose ${\rm Re}\,\lambda_{2}(D+jR)\leq 0$. By Lemma~\ref{fact:one}
this implies that $D+jR$ has at least two eigenvalues on the
imaginary axis. We already know that one of these eigenvalues is at
the origin since $[D+jR]\one_{q}=0$. Let $\lambda=j\sigma$ with
$\sigma\in\Real$ be the other eigenvalue. We claim that there exists
an eigenvector $u\notin{\rm span}\,\{\one_{q}\}$ satisfying
$[D+jR]u=j\sigma u$. This is obvious when $\sigma\neq0$. It is still
true when $\sigma=0$ (i.e., when the eigenvalue at the origin is
repeated) because otherwise $\one_{q}$ would be the only eigenvector
for the eigenvalue at the origin, which would imply the existence of
a generalized eigenvector $v$ satisfying $[D+jR]v=\one_{q}$. But,
since $D$ and $R$ are symmetric, this would result in the following
contradiction:
$q=\one_{q}^{T}\one_{q}=\one_{q}^{T}[D+jR]v=([D+jR]\one_{q})^{T}v=0$.
Now, without loss of generality let $\|u\|=1$. We can write
$j\sigma=u^{*}(j\sigma u)=u^{*}[D+jR]u=u^{*}Du+ju^{*}Ru$ which
implies $u^{*}Du=0$ and then $Du=0$ due to the fact that $D$ and $R$
are symmetric positive semidefinite. That $u$ belongs to ${\rm
null}\,D$ means that $Ru=\sigma u$, i.e., it is an eigenvector of
$R$, since $j\sigma u=[D+jR]u=jRu$. Hence we have established that
if the condition ${\rm Re}\,\lambda_{2}(D+jR)>0$ fails to hold then
$R$ must have an eigenvector in the set ${\rm null}\,D\setminus{\rm
span}\,\{\one_{q}\}$.

Now we show the other direction. Suppose $R$ has an eigenvector
$z\notin{\rm span}\,\{\one_{q}\}$ satisfying $Dz=0$. Let $\lambda$
be the corresponding eigenvalue, which is real since $R$ is
symmetric positive semidefinite. This implies $j\lambda$ is an
eigenvalue of $D+jR$ since $[D+jR]z=Dz+jRz=j\lambda z$. Then
combining $z\notin{\rm span}\,\{\one_{q}\}$ and $[D+jR]\one_{q}=0$
allows us to see that $D+jR$ has at least two eigenvalues on the
imaginary axis. This at once yields ${\rm Re}\,\lambda_{2}(D+jR)\leq
0$.
\end{proof}
\vspace{0.12in}

\noindent{\bf Proof of Theorem~\ref{thm:easy}.} We prove necessity
first. Let $(\V,\,\B_{\rm d},\,\B_{\rm r})$ be an SS
interconnection. This means that we can find laplacians $D\in{\rm
lap}\,(\V,\,\B_{\rm d})$ and $R\in{\rm lap}\,(\V,\,\B_{\rm r})$
satisfying ${\rm Re}\,\lambda_{2}(D+jR)>0$. Now, we claim that
$\B_{\rm d}$ cannot be empty, for otherwise $D=0$ and ${\rm
Re}\,\lambda_{2}(D+jR)={\rm Re}\,\lambda_{2}(jR)=0$ since all the
eigenvalues of $jR$ are purely imaginary thanks to that all the
eigenvalues of $R\in\Real^{q\times q}$ are real due to symmetry
$R=R^{T}$. Furthermore, the graph $\Gamma=(\V,\,\B_{\rm
d}\cup\B_{\rm r})$ must be connected. To see that suppose $\Gamma$
is not connected. Let $G$ be the incidence matrix representing
$\Gamma$. By construction we have $\one_{q}\in{\rm null}\,G^{T}$,
however ${\rm null}\,G^{T}\neq{\rm span}\,\{\one_{q}\}$ because
$\Gamma$ is not connected. Hence the null space of $G^{T}$ must be
of dimension two or larger. This allows us to be able to find a
vector $v\in{\rm null}\,G^{T}\setminus{\rm span}\,\{\one_{q}\}$. Let
$G_{\rm d}$ and $G_{\rm r}$ be the incidence matrices of the graphs
$(\V,\,\B_{\rm d})$ and $(\V,\,\B_{\rm r})$, respectively. Note that
by construction ${\rm null}\,G_{\rm d}^{T}={\rm null}\,D$ and ${\rm
null}\,G_{\rm r}^{T}={\rm null}\,R$. Since $G$ is the incidence
matrix of $\Gamma=(\V,\,\B_{\rm d}\cup\B_{\rm r})$ we can write
${\rm null}\,G_{\rm d}^{T}\cap{\rm null}\,G_{\rm r}^{T}={\rm
null}\,G^{T}\supset\{\one_{q},\,v\}$. Therefore
$(D+jR)\one_{q}=(D+jR)v=0$. This implies $D+jR$ has at least two
eigenvalues at the origin thanks to the linear independence of $v$
and $\one_{q}$. We therefore have to have ${\rm
Re}\,\lambda_{2}(D+jR)\leq 0$, which contradicts our starting point
${\rm Re}\,\lambda_{2}(D+jR)>0$.

We establish sufficiency through construction. Let the graph
$(\V,\,\B_{\rm d}\cup\B_{\rm r})$ be connected and $\B_{\rm d}$ be
nonempty. Consider the graph $\Gamma_{\rm r}=(\V,\,\B_{\rm r})$. We
study two cases. {\em Case~1: $\Gamma_{\rm r}$ is not connected.}
Let $c\leq q$ be the number of (connected) components of
$\Gamma_{\rm r}$. (Since $\Gamma_{\rm r}$ is not connected we have
$c\geq 2$.) Let the pairs
$(\V_{1},\,\B_{1}),\,(\V_{2},\,\B_{2}),\,\ldots,\,(\V_{c},\,\B_{c})$
denote these components. For each $i=1,\,2,\,\ldots,\,c$ choose now
some laplacian $R_{i}\in{\rm
lap}\,(\V_{i},\,\B_{i})\subset\Real^{q_{i}\times q_{i}}$ that has no
eigenvector with zero entry. Such choice exists thanks to
Lemma~\ref{lem:five}. Furthermore, let the following condition hold
on the collection $\{R_{1},\,R_{2},\,\ldots,\,R_{c}\}$: if $\lambda$
is a common eigenvalue of $R_{i}$ and $R_{j}$ ($i\neq j$) then
$\lambda=0$. We can impose such a condition (without violating the
condition on the eigenevectors) since $R_{i}\in{\rm
lap}\,(\V_{i},\,\B_{i})$ implies $\alpha R_{i}\in{\rm
lap}\,(\V_{i},\,\B_{i})$ for any positive scalar $\alpha$, which we
may use to relocate the nonzero eigenvalues of $R_{i}$ without
disturbing its eigenvectors. Let us now construct the block diagonal
matrix $R={\rm blkdiag}\,(R_{1},\,R_{2},\,\ldots,\,R_{c})$. Note
that $R\in{\rm lap}\,(\V,\,\B_{\rm r})$. Let now $z\notin{\rm
span}\,\{\one_q\}$ be an eigenvector of $R$ with the corresponding
eigenvalue $\lambda$, i.e., $Rz=\lambda z$. Since $R$ is positive
semidefinite, $\lambda\geq 0$. Let $G_{\rm d}$ be the incidence
matrix of the graph $(\V,\,\B_{\rm d})$. Choose $\Lambda_{\rm d}=I$
and construct the laplacian $D=G_{\rm d}G_{\rm d}^{T}=G_{\rm
d}\Lambda_{\rm d}G_{\rm d}^{T}\in{\rm lap}\,(\V,\,\B_{\rm d})$. We
claim that $Dz\neq0$, which is equivalent to $G_{\rm d}^{T}z\neq 0$.
To prove our claim we study two subcases. {\em Subcase~1.1:
$\lambda>0$.} Since the set of eigenvalues of the block diagonal
matrix $R$ is the union of sets of eigenvalues of its individual
blocks and no two blocks share a nonzero eigenvalue, $\lambda$ must
be an eigenvalue of one (and only one) of the blocks
$R_{1},\,R_{2},\,\ldots,\,R_{c}$. Without loss of generality let
that block be $R_{1}\in\Real^{q_{1}\times q_{1}}$. Hence the
eigenvector $z$ must be of the form $z=[z_{1}^{T}\ \ 0]^{T}$ where
$z_{1}\in\Real^{q_{1}}$ satisfies $R_{1}z_{1}=\lambda z_{1}$, i.e.,
$z_{1}$ is an eigenvector of $R_{1}$. Now, since $(\V,\,\B_{\rm
d}\cup\B_{\rm r})$ is connected there has to be an edge
$\{\nu_{i},\,\nu_{j}\}\in\B_{\rm d}$ that extends between the first
component $(\V_{1},\,\B_{1})$ and another one, i.e., the indices
$i,\,j$ should satisfy $i\in\{1,\,2,\,\ldots,\,q_{1}\}$ and
$j\notin\{1,\,2,\,\ldots,\,q_{1}\}$. This implies that the incidence
matrix $G_{\rm d}$ must have a column of the form $b=e_{i}-e_{j}$
(or $b=e_{j}-e_{i}$) with $i\in\{1,\,2,\,\ldots,\,q_{1}\}$ and
$j\notin\{1,\,2,\,\ldots,\,q_{1}\}$. Hence
$b^{T}z=(e_{i}-e_{j})^{T}[z_{1}^{T}\ \ 0]^{T}=e_{i}^{T}z_{1}\neq 0$
since $z_{1}$ has no zero entry. This yields $G_{\rm d}^{T}z\neq 0$,
which in turn yields $Dz\neq 0$. {\em Subcase~1.2: $\lambda=0$.} In
this case $z$ satisfies $Rz=0$ meaning $z\in{\rm null}\,G_{\rm
r}^{T}$, where $G_{\rm r}$ is the incidence matrix of the graph
$(\V,\,\B_{\rm r})$. To show $Dz\neq 0$ suppose otherwise, i.e.,
$Dz=0$, which implies $z\in{\rm null}\,G_{\rm d}^{T}$. Therefore
$z\in{\rm null}\,G_{\rm d}^{T}\cap{\rm null}\,G_{\rm r}^{T}={\rm
null}\,G^{T}$ where $G$ is the incidence matrix of the graph
$(\V,\,\B_{\rm d}\cup\B_{\rm r})$. Since $(\V,\,\B_{\rm
d}\cup\B_{\rm r})$ is connected, we have ${\rm null}\,G^{T}={\rm
span}\,\{\one_{q}\}$ but this implies $z\in{\rm
span}\,\{\one_{q}\}$, contradicting our earlier assumption that
$z\notin{\rm span}\,\{\one_{q}\}$. This completes the proof of our
claim $Dz\neq 0$. Hence we have established the following: $R$ can
have no eigenvector $z$ in the set ${\rm null}\,D\setminus{\rm
span}\,\{\one_{q}\}$. Lemma~\ref{fact:two} then yields that the
network $(\V,\,\B_{\rm d},\,\B_{\rm r})$ is SS.

{\em Case~2: $\Gamma_{\rm r}$ is connected.} Note that if $q=2$ then
${\rm null}\,D={\rm span}\,\{\one_{q}\}$ for all $D\in{\rm
lap}\,(\V,\,\B_{\rm d})$, yielding ${\rm null}\,D\setminus{\rm
span}\,\{\one_{q}\}=\emptyset$. The result then trivially follows by
Lemma~\ref{fact:two}. Thus we henceforth let $q\geq 3$. Choose some
edge from the set $\B_{\rm d}$. Without loss of generality let this
edge be labeled $\{\nu_{q_{1}},\,\nu_{q_{1}+1}\}$, where
$q_{1}\in\{1,\,2,\,\ldots,\,q-1\}$. Then choose some disjoint vertex
sets $\V_{1},\,\V_{2}\in\V$ and edge sets $\B_{1},\,\B_{2}\in\B$
such that the pairs $(\V_{1},\,\B_{1})$ and $(\V_{2},\,\B_{2})$ are
connected subgraphs, $\nu_{q_{1}}\in\V_{1}$,
$\nu_{q_{1}+1}\in\V_{2}$, and $\V_{1}\cup\V_{2}=\V$. Obtain now the
edge set $\B_{\rm r}^{-}:=\B_{\rm r}\setminus(\B_{1}\cup\B_{2})$.
Since $(\V,\,\B_{\rm r})$ is connected the new edge set $\B_{\rm
r}^{-}$ is a strict subset of $\B_{\rm r}$. Moreover, the graph
$(\V,\,\B_{\rm r}^{-})$ has exactly two (connected) components:
$(\V_{1},\,\B_{1})$ and $(\V_{2},\,\B_{2})$, where without loss of
generality we can let
$\V_{1}=\{\nu_{1},\,\nu_{2},\,\ldots,\,\nu_{q_{1}}\}$ and
$\V_{2}=\{\nu_{q_{1}+1},\,\nu_{q_{1}+2},\,\ldots,\,\nu_{q}\}$. Let
now $q_{2}:=q-q_{1}$ and choose some $R_{1}\in{\rm
lap}\,(\V_{1},\,\B_{1})$ and $R_{2}\in{\rm lap}\,(\V_{2},\,\B_{2})$
that have no eigenvectors with zero entry. Such
$R_{1}\in\Real^{q_{1}\times q_{1}}$ and $R_{2}\in\Real^{q_{2}\times
q_{2}}$ exist thanks to Lemma~\ref{lem:five}. Construct the $q\times
q$ laplacians $R^{-}={\rm blkdiag}\,(R_{1},\,R_{2})\in{\rm
lap}\,(\V,\,\B_{\rm r}^{-})$ and $D=G_{\rm d}G_{\rm d}^{T}\in{\rm
lap}\,(\V,\,\B_{\rm d})$, where $G_{\rm d}$ is the incidence matrix
of $(\V,\,\B_{\rm d})$. By earlier analysis we know that $R^{-}$ can
have no eigenvector in the set ${\rm null}\,D\setminus{\rm
span}\,\{\one_{q}\}$. Let
$\sigma_{1},\,\sigma_{2},\,\ldots,\,\sigma_{q_{1}}$ be the
eigenvalues of $R_{1}$ with the corresponding unit eigenvectors
$u_{1},\,u_{2},\,\ldots,\,u_{q-1}\in\Real^{q_{1}}$. Since $R_{1}$
has no eigenvector with zero entry, these eigenvalues are distinct.
Consequently, the eigenvectors are pairwise orthogonal since $R_{1}$
is symmetric. Likewise, we let
$\sigma_{q_{1}+1},\,\sigma_{q_{1}+2},\,\ldots,\,\sigma_{q}$ be the
distinct eigenvalues of $R_{2}$ with the corresponding pairwise
orthogonal unit eigenvectors
$u_{q_{1}+1},\,u_{q_{1}+2},\,\ldots,\,u_{q}\in\Real^{q_{2}}$. Also,
without loss of generality (see the explanation above) we assume
$\sigma_{i}\neq\sigma_{j}$ (when $i\neq j$) unless
$\sigma_{i}=\sigma_{j}=0$. Note that the set of eigenvalues of
$R^{-}$ is the union of sets of eigenvalues of $R_{1}$ and $R_{2}$.
Define the positive constants $c_{1},\,c_{2}$ as
\begin{eqnarray*}
c_{1}&:=&\min_{\sigma_{i}\neq\sigma_{j}}|\sigma_{i}-\sigma_{j}|\,,\\
c_{2}&:=&\min_{z\in\C}\,\|Dz\|
\end{eqnarray*}
where we let $\C=\{z\in\Real^{q}:\|z\|=1,\,R^{-}z=\sigma_{i}z\
\mbox{for some}\ i,\,\mbox{and}\ \one_{q}^{T}z=0\}$. Let $B$ be the
incidence matrix of the graph $(\V,\,\B_{\rm r}\setminus\B_{\rm
r}^{-})$. Choose some $w>0$ satisfying
\begin{eqnarray}\label{eqn:wuhuu}
w<\frac{c_{1}c_{2}}{4\|B\|^{2}\sqrt{c_{2}^{2}+\|D\|^{2}}}
\end{eqnarray}
and construct the laplacian
\begin{eqnarray*}
R=R^{-}+wBB^{T}\,.
\end{eqnarray*}
Note that $R\in{\rm lap}\,(\V,\,\B_{\rm r})$. Therefore, since
$(\V,\,\B_{\rm r})$ is connected, the eigenvalue of $R$ at the
origin is not repeated. Let now $u\in\Real^{q}$ be a unit
eigenvector of $R$ satisfying $u\notin{\rm span}\,\{\one_{q}\}$.
Being a laplacian, $R$ satisfies $R\one_{q}=0$, i.e., $\one_{q}$ is
the eigenvector for the eigenvalue at the origin. Hence
$\one_{q}^{T}u=0$ because $R\in\Real^{q\times q}$ is symmetric and
the eigenvectors $\one_{q}$ and $u$ correspond to different
eigenvalues. By \cite[Cor.~8.1.6]{golub96} we have $Ru=(\sigma+h)u$
for some
$\sigma\in\{\sigma_{1},\,\sigma_{2},\,\ldots,\,\sigma_{q}\}$ and
$|h|\leq \|wBB^{T}\|=w\|B\|^{2}$. Without loss of generality we take
$\sigma=\sigma_{1}$, i.e., $Ru=(\sigma_{1}+h)u$. Let
$\alpha_{1},\,\alpha_{2},\,\ldots,\,\alpha_{q}\in\Real$ be such that
\begin{eqnarray*}
u=\left[\begin{array}{c}\sum_{i=1}^{q_{1}}\alpha_{i}u_{i}\\
\sum_{i=q_{1}+1}^{q}\alpha_{i}u_{i}\end{array}\right]\,.
\end{eqnarray*}
Since $\|u\|=1$ we have $\sum_{i=1}^{q}\alpha_{i}^{2}=1$. We can
write
\begin{eqnarray}\label{eqn:leq5w}
\left\|\left[\begin{array}{c}\sum_{i=2}^{q_{1}}\alpha_{i}(\sigma_{1}-\sigma_{i}+h)u_{i}\\
\sum_{i=q_{1}+1}^{q}\alpha_{i}(\sigma_{1}-\sigma_{i}+h)u_{i}\end{array}\right]\right\|
&=&\left\|-\left[\begin{array}{c}h\alpha_{1}u_{1}\\
0\end{array}\right] + (\sigma_{1}+h)\left[\begin{array}{c}\sum_{i=1}^{q_{1}}\alpha_{i}u_{i}\\
\sum_{i=q_{1}+1}^{q}\alpha_{i}u_{i}\end{array}\right]
-\left[\begin{array}{c}\sum_{i=1}^{q_{1}}\alpha_{i}\sigma_{i}u_{i}\\ \sum_{i=q_{1}+1}^{q}\alpha_{i}\sigma_{i}u_{i}\end{array}\right]\right\|\nonumber\\
&=&\left\|-\left[\begin{array}{c}h\alpha_{1}u_{1}\\
0\end{array}\right]+
(\sigma_{1}+h)u-R^{-}u\right\|\nonumber\\
&=&\left\| -\left[\begin{array}{c}h\alpha_{1}u_{1}\\
0\end{array}\right] + Ru-\left[R-wBB^{T}\right]u\right\|\nonumber\\
&=&\left\| -\left[\begin{array}{c}h\alpha_{1}u_{1}\\
0\end{array}\right] + wBB^{T}u\right\|\nonumber\\
&\leq&|h|\cdot|\alpha_{1}|\cdot\|u_{1}\|+w\left\|BB^{T}\right\|\cdot\|u\|\nonumber\\
&\leq&2w\|B\|^{2}\,.
\end{eqnarray}
We now study two subcases. {\em Subcase~2.1: $\sigma_{1}>0$.} Note
that by \eqref{eqn:wuhuu} we have $w<c_{1}/(4\|B\|^{2})$ from which
follows $|h|\leq c_{1}/2$. By \eqref{eqn:leq5w} we can therefore
write
\begin{eqnarray*} \sum_{i=2}^{q}\alpha_{i}^{2}
&=&\left\| \left[\begin{array}{c}\sum_{i=2}^{q_{1}}\alpha_{i}u_{i}\\
\sum_{i=q_{1}+1}^{q}\alpha_{i}u_{i}
\end{array}\right] \right\|^{2} =\frac{4}{c_{1}^{2}}\left\| \left[\begin{array}{c}\sum_{i=2}^{q_{1}}\frac{c_{1}}{2}\alpha_{i}u_{i}\\
\sum_{i=q_{1}+1}^{q}\frac{c_{1}}{2}\alpha_{i}u_{i}\end{array}\right]
\right\|^{2} \leq \frac{4}{c_{1}^{2}}\left\|\left[\begin{array}{c}\sum_{i=2}^{q_{1}}\alpha_{i}(\sigma_{1}-\sigma_{i}+h)u_{i}\\
\sum_{i=q_{1}+1}^{q}\alpha_{i}(\sigma_{1}-\sigma_{i}+h)u_{i}\end{array}\right]\right\|^{2}\\
&\leq& \frac{16w^{2}\|B\|^{4}}{c_{1}^{2}}\,.
\end{eqnarray*}
This lets us have
\begin{eqnarray}\label{eqn:sph}
\|Du\| &=&\left\|D\left[\begin{array}{c}\alpha_{1}u_{1}\\
0\end{array}\right]+D\left[\begin{array}{c}\sum_{i=2}^{q_{1}}\alpha_{i}u_{i}\\
\sum_{i=q_{1}+1}^{q}\alpha_{i}u_{i}\end{array}\right]\right\|\nonumber\\
&\geq&|\alpha_{1}|\cdot\left\|D\left[\begin{array}{c}u_{1}\\
0\end{array}\right]\right\|-\|D\|\cdot \left\| \left[\begin{array}{c}\sum_{i=2}^{q_{1}}\alpha_{i}u_{i}\\
\sum_{i=q_{1}+1}^{q}\alpha_{i}u_{i}\end{array}\right] \right\|\nonumber\\
&\geq&c_{2}\left(1-\frac{16w^{2}\|B\|^{4}}{c_{1}^{2}}\right)^{1/2}-\|D\|\frac{4w\|B\|^{2}}{c_{1}}\,.
\end{eqnarray}
Combining \eqref{eqn:wuhuu} and \eqref{eqn:sph} yields $\|Du\|>0$,
meaning $u\notin{\rm null}\,D$. {\em Subcase~2.2: $\sigma_{1}=0$.}
When $\sigma_{1}=0$ we have $Ru=hu$. That $R$ is symmetric positive
semidefinite implies that the eigenvalue $h$ is nonnegative. Now,
$h$ cannot be zero because then we have $Ru=0$ which implies
$u\in{\rm span}\,\{\one_{q}\}$, contradicting our initial
assumption. Hence $h>0$. Note that $\sigma_{1}=0$ yields
$u_{1}=\one_{q_{1}}/\sqrt{q_{1}}$ since the laplacian $R_{1}$
represents a connected component. Now, the second block of $R^{-}$
has also a an eigenvalue at the origin which is not repeated because
$R_{2}$ too represents a connected component. Without loss of
generality we can let this eigenvalue be $\sigma_{q}=0$. This means
$u_{q}=\one_{q_{2}}/\sqrt{q_{2}}$. Observing the orthogonal
relationships $\one_{q}^{T}u=0$, $\one_{q_{1}}^{T}u_{i}=0$ for
$i=2,\,3,\,\ldots,\,q_{1}$, and $\one_{q_{2}}^{T}u_{i}=0$ for
$i=q_{1}+1,\,q_{1}+2,\,\ldots,\,q-1$; we obtain the identity
$q_{2}\alpha_{q}^{2}=q_{1}\alpha_{1}^{2}$ through
\begin{eqnarray*}
\alpha_{1}\sqrt{q_{1}}+\alpha_{q}\sqrt{q_{2}}
&=&(\one_{q_{1}}^{T}u_{1})\alpha_{1}+\sum_{i=2}^{q_{1}}(\one_{q_{1}}^{T}u_{i})\alpha_{i}
+(\one_{q_{2}}^{T}u_{q})\alpha_{q}+\sum_{i=q_{1}+1}^{q-1}(\one_{q_{2}}^{T}u_{i})\alpha_{i}\\
&=&\one_{q_{1}}^{T}\sum_{i=1}^{q_{1}}\alpha_{i}u_{i}+\one_{q_{2}}^{T}\sum_{i=q_{1}+1}^{q}\alpha_{i}u_{i}
=\one_{q}^{T}u=0\,.
\end{eqnarray*}
Therefore
\begin{eqnarray}\label{eqn:moveup}
(1+q_{1}/q_{2})\alpha_{1}^{2}=\alpha_{1}^{2}+\alpha_{q}^{2}=1-\sum_{i=2}^{q-1}\alpha_{i}^{2}\,.
\end{eqnarray}
Now, by \eqref{eqn:leq5w} we can write
\begin{eqnarray*} \sum_{i=2}^{q-1}\alpha_{i}^{2}
&=&\left\| \left[\begin{array}{c}\sum_{i=2}^{q_{1}}\alpha_{i}u_{i}\\
\sum_{i=q_{1}+1}^{q-1}\alpha_{i}u_{i}
\end{array}\right] \right\|^{2} =\frac{4}{c_{1}^{2}}\left\| \left[\begin{array}{c}\sum_{i=2}^{q_{1}}\frac{c_{1}}{2}\alpha_{i}u_{i}\\
\sum_{i=q_{1}+1}^{q-1}\frac{c_{1}}{2}\alpha_{i}u_{i}\end{array}\right]
\right\|^{2} \leq \frac{4}{c_{1}^{2}}\left\|\left[\begin{array}{c}\sum_{i=2}^{q_{1}}\alpha_{i}(\sigma_{1}-\sigma_{i}+h)u_{i}\\
\sum_{i=q_{1}+1}^{q}\alpha_{i}(\sigma_{1}-\sigma_{i}+h)u_{i}\end{array}\right]\right\|^{2}\\
&\leq& \frac{16w^{2}\|B\|^{4}}{c_{1}^{2}}\,.
\end{eqnarray*}
This, \eqref{eqn:moveup}, and the identity
$\alpha_{q}=-\alpha_{1}\sqrt{q_{1}/q_{2}}$ allow us write
\begin{eqnarray}\label{eqn:quelle}
\|Du\| &=& \left\|D\left[\begin{array}{c}\alpha_{1}u_{1}\\
\alpha_{q}u_{q}\end{array}\right]+D \left[\begin{array}{c}\sum_{i=2}^{q_{1}}\alpha_{i}u_{i}\\
\sum_{i=q_{1}+1}^{q-1}\alpha_{i}u_{i}
\end{array}\right]  \right\|\nonumber\\
&\geq&\left\|\alpha_{1}D\left[\begin{array}{c}\frac{\one_{q_{1}}}{\sqrt{q_{1}}}\\
-\frac{\sqrt{q_{1}}}{\sqrt{q_{2}}}\frac{\one_{q_{2}}}{\sqrt{q_{2}}}\end{array}\right]\right\|-\|D\|\cdot \left\| \left[\begin{array}{c}\sum_{i=2}^{q_{1}}\alpha_{i}u_{i}\\
\sum_{i=q_{1}+1}^{q-1}\alpha_{i}u_{i}
\end{array}\right] \right\|\nonumber\\
&\geq&|\alpha_{1}|c_{2}\sqrt{1+q_{1}/q_{2}}-\|D\|\left(\sum_{i=2}^{q-1}\alpha_{i}^{2}\right)^{1/2}\nonumber\\
&\geq&c_{2}\left(1-\frac{16w^{2}\|B\|^{4}}{c_{1}^{2}}\right)^{1/2}-\|D\|\frac{4w\|B\|^{2}}{c_{1}}\,.
\end{eqnarray}
Combining \eqref{eqn:wuhuu} and \eqref{eqn:quelle} yields
$\|Du\|>0$, meaning $u\notin{\rm null}\,D$. Hence, in neither of the
subcases (when $\sigma_{1}>0$ and when $\sigma_{1}=0$) the laplacian
$R$ can have an eigenvector in the set ${\rm null}\,D\setminus{\rm
span}\,\{\one_{q}\}$. Then we deduce by Lemma~\ref{fact:two} that
the interconnection $(\V,\,\B_{\rm d},\,\B_{\rm r})$ has structural
synchronization property.\hfill\null\hfill$\blacksquare$

\section{Strong structural synchronization}

It directly follows from the definitions that strong structural
synchronization implies structural synchronization. We later present
examples where an SS network fails to be SSS. That is to say, SS and
SSS are not equivalent.  This brings up the question: Under what
conditions does an SS interconnection become SSS? The below theorem
presents one such characterization.

\begin{theorem}\label{thm:SSS}
Let the triple $(\V,\,\B_{\rm d},\,\B_{\rm r})$ has structural
synchronization property. And let $G_{\rm d}$ and $G_{\rm r}$ be the
incidence matrices of the graphs $(\V,\,\B_{\rm d})$ and
$(\V,\,\B_{\rm r})$, respectively. Then the interconnection
$(\V,\,\B_{\rm d},\,\B_{\rm r})$ has strong structural
synchronization property if and only if either $\B_{\rm
r}=\emptyset$ or else the equation ${\rm sgn}(G_{\rm r}^{T}G_{\rm
r}x)={\rm sgn}(x)$ has no solution $x$ in the set ${\rm
null}\,G_{\rm d}^{T}G_{\rm r}\setminus\{0\}$.
\end{theorem}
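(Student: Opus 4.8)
The plan is to recast the SSS property, via Lemma~\ref{fact:two}, as a statement about eigenvectors of the restorative laplacians only, and then to convert the resulting eigenvector condition into the stated sign condition through the substitution $z=G_{\rm r}x$. The point is that for every $D\in{\rm lap}(\V,\B_{\rm d})$ one has ${\rm null}\,D={\rm null}\,G_{\rm d}^{T}$ regardless of the chosen edge weights, so Lemma~\ref{fact:two} says that SSS \emph{fails} exactly when there exist $R\in{\rm lap}(\V,\B_{\rm r})$ and $z\notin{\rm span}\,\{\one_{q}\}$ with $G_{\rm d}^{T}z=0$ and $Rz=\lambda z$ for some $\lambda\in\Real$. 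Everything then reduces to characterizing such triples $(R,z,\lambda)$.

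For the ``only if'' direction I would start from such an $R,z,\lambda$. Since $R$ is positive semidefinite, $\lambda\geq 0$; and $\lambda=0$ is impossible, for $Rz=0$ would place $z$ in ${\rm null}\,G_{\rm d}^{T}\cap{\rm null}\,G_{\rm r}^{T}={\rm null}\,G^{T}$, where $G$ is the incidence matrix of $(\V,\B_{\rm d}\cup\B_{\rm r})$, which is connected by Theorem~\ref{thm:easy}, forcing $z\in{\rm span}\,\{\one_{q}\}$. Hence $\lambda>0$, and writing $R=G_{\rm r}\Lambda_{\rm r}G_{\rm r}^{T}$ we get $z=\lambda^{-1}G_{\rm r}\Lambda_{\rm r}G_{\rm r}^{T}z=G_{\rm r}x$ with $x:=\lambda^{-1}\Lambda_{\rm r}G_{\rm r}^{T}z$; this $x$ is nonzero (as $z\neq0$) and lies in ${\rm null}\,G_{\rm d}^{T}G_{\rm r}$ (as $G_{\rm d}^{T}z=0$). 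Finally $G_{\rm r}^{T}G_{\rm r}x=G_{\rm r}^{T}z=\lambda\Lambda_{\rm r}^{-1}x$, and since $\lambda>0$ and $\Lambda_{\rm r}^{-1}$ is a positive diagonal matrix, $\sgn(G_{\rm r}^{T}G_{\rm r}x)=\sgn(x)$, so $x$ solves the sign equation in ${\rm null}\,G_{\rm d}^{T}G_{\rm r}\setminus\{0\}$. (When $\B_{\rm r}=\emptyset$ the only $R$ is $0$, every $z$ is an eigenvector, and the $\lambda=0$ argument above shows SSS cannot fail, consistent with the statement.)

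For the converse I would reverse the construction. Given $x\in{\rm null}\,G_{\rm d}^{T}G_{\rm r}\setminus\{0\}$ with $\sgn(G_{\rm r}^{T}G_{\rm r}x)=\sgn(x)$, first note $x\notin{\rm null}\,G_{\rm r}$ (else $G_{\rm r}^{T}G_{\rm r}x=0$ while $x\neq0$, contradicting the entrywise sign identity), so $z:=G_{\rm r}x\neq0$; moreover $\one_{q}^{T}G_{\rm r}=0$ gives $z\perp\one_{q}$, hence $z\notin{\rm span}\,\{\one_{q}\}$, and $G_{\rm d}^{T}z=G_{\rm d}^{T}G_{\rm r}x=0$. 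Now I exhibit edge weights making $z$ an eigenvector: for each edge $i$ set $w_{i}:=x_{i}/(G_{\rm r}^{T}G_{\rm r}x)_{i}$ when $x_{i}\neq0$ (positive by the sign identity) and $w_{i}:=1$ otherwise; with $\Lambda_{\rm r}={\rm diag}(w_{1},\ldots,w_{p})$ one checks entrywise that $\Lambda_{\rm r}G_{\rm r}^{T}G_{\rm r}x=x$, whence $Rz=G_{\rm r}\Lambda_{\rm r}G_{\rm r}^{T}G_{\rm r}x=G_{\rm r}x=z$ for $R:=G_{\rm r}\Lambda_{\rm r}G_{\rm r}^{T}\in{\rm lap}(\V,\B_{\rm r})$. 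Thus $R$ has an eigenvector $z$ in ${\rm null}\,G_{\rm d}^{T}\setminus{\rm span}\,\{\one_{q}\}={\rm null}\,D\setminus{\rm span}\,\{\one_{q}\}$ for every $D\in{\rm lap}(\V,\B_{\rm d})$, so Lemma~\ref{fact:two} gives ${\rm Re}\,\lambda_{2}(D+jR)\leq0$ and SSS fails.

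The crux is not a technical obstacle but spotting the right correspondence: an eigenvector $z$ of a restorative laplacian that is killed by $D$ corresponds, via $z=G_{\rm r}x$, to an edge-current vector $x$ for which the \emph{unweighted} vector $G_{\rm r}^{T}G_{\rm r}x$ is sign-compatible with $x$, the freedom in the edge weights being precisely what upgrades sign-compatibility to an exact eigenvalue equation. The one step needing care is excluding $\lambda=0$, so that one may legitimately pass into ${\rm range}\,G_{\rm r}$; this is where the standing SS hypothesis enters, through connectedness of $(\V,\B_{\rm d}\cup\B_{\rm r})$ supplied by Theorem~\ref{thm:easy}.
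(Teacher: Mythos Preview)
Your proof is correct and follows essentially the same route as the paper's: both reduce the question via Lemma~\ref{fact:two} to the existence of an eigenvector $z$ of some $R\in{\rm lap}(\V,\B_{\rm r})$ lying in ${\rm null}\,G_{\rm d}^{T}\setminus{\rm span}\{\one_{q}\}$, rule out $\lambda=0$ using connectedness from Theorem~\ref{thm:easy}, and pass between $z$ and the edge vector $x$ via $z=G_{\rm r}x$ with $x$ proportional to $\Lambda_{\rm r}G_{\rm r}^{T}z$ (the paper uses $y=\Lambda_{\rm r}G_{\rm r}^{T}u$, your $x$ differs only by the harmless scalar $\lambda^{-1}$), constructing the weights in the converse exactly as you do. The only quibble is that your labels ``only if'' and ``converse'' are swapped relative to the theorem's biconditional, but the mathematics is right.
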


\begin{proof}
Let us be given an SS interconnection $\G=(\V,\,\B_{\rm d},\,\B_{\rm
r})$. We consider two possibilities separately. {\em Case~1:
$\B_{\rm r}=\emptyset$.} Since $\G$ is SS, by Theorem~\ref{thm:easy}
the graph $(\V,\,\B_{\rm d}\cup\B_{\rm r})$ is connected. Therefore
the graph $(\V,\,\B_{\rm d})$ is connected because $\B_{\rm r}$ is
empty. Thence ${\rm null}\,G_{\rm d}^{T}={\rm span}\,\{\one_{q}\}$.
Let $D\in{\rm lap}\,(\V,\,\B_{\rm d})$ be an arbitrary laplacian.
Since ${\rm null}\,D={\rm null}\,G_{\rm d}^{T}$ we have ${\rm
null}\,D={\rm span}\,\{\one_{q}\}$. This yields ${\rm
null}\,D\setminus{\rm span}\,\{\one_{q}\}=\emptyset$. Then by
Lemma~\ref{fact:two} we trivially have ${\rm
Re}\,\lambda_{2}(D+jR)>0$ for all $R\in{\rm lap}\,(\V,\,\B_{\rm
r})$. That is, $\G$ is SSS.

{\em Case~2: $\B_{\rm r}\neq\emptyset$.} Suppose $\G$ is not SSS.
Then there exist two diagonal matrices $\Lambda_{\rm d}$ and
$\Lambda_{\rm r}$, both with positive diagonal entries, such that
${\rm Re}\,\lambda_{2}(D_{1}+jR_{1})\leq0$ for $D_{1}=G_{\rm
d}\Lambda_{\rm d}G_{\rm d}^{T}$ and $R_{1}=G_{\rm r}\Lambda_{\rm
r}G_{\rm r}^{T}$. This implies, by Lemma~\ref{fact:two}, that there
exists an eigenvector $u\notin{\rm span}\,\{\one_{q}\}$ of $R_{1}$
satisfying $D_{1}u=0$. Let $\lambda$ be the corresponding
eigenvalue, i.e., $R_{1}u=\lambda u$. Note first that $\lambda\geq
0$ because $R_{1}$ is symmetric positive semidefinite. Also,
$\lambda\neq 0$; for, otherwise, (i.e., if $\lambda=0$) we would
simultaneously have $R_{1}u=0$ and $D_{1}u=0$, which would imply
$u\in{\rm null}\,G_{\rm r}^{T}\cap{\rm null}\,G_{\rm d}^{T}$. But
since $\G$ is SS, the graph $(\V,\,\B_{\rm d}\cup\B_{\rm r})$ is
connected by Theorem~\ref{thm:easy}, meaning ${\rm null}\,G_{\rm
r}^{T}\cap{\rm null}\,G_{\rm d}^{T}={\rm span}\,\{\one_{q}\}$. This
would imply that $u$ belongs to the set ${\rm span}\,\{\one_{q}\}$,
which it doesn't. Therefore $\lambda>0$. Define $y=\Lambda_{\rm
r}G_{\rm r}^{T}u$. The vector $y$ is nonzero because if $y=0$ then
we would have the following contradiction $0=G_{\rm r}y=G_{\rm
r}\Lambda_{\rm r}G_{\rm r}^{T}u=R_{1}u=\lambda u\neq 0$. We can
write
\begin{eqnarray}\label{eqn:sgn}
\lambda y&=& \Lambda_{\rm r}G_{\rm r}^{T}(\lambda u)\nonumber\\
&=& \Lambda_{\rm r}G_{\rm r}^{T}R_{1}u\nonumber\\
&=& \Lambda_{\rm r}G_{\rm r}^{T}G_{\rm r}\Lambda_{\rm
r}G_{\rm r}^{T}u\nonumber\\
&=& \Lambda_{\rm r}G_{\rm r}^{T}G_{\rm r}y\,.
\end{eqnarray}
Since $\lambda>0$ and $\Lambda_{\rm r}$ is a diagonal matrix with
positive diagonal entries, \eqref{eqn:sgn} implies ${\rm
sgn}(y)={\rm sgn}(G_{\rm r}^{T}G_{\rm r}y)$. Furthermore, since
$D_{1}u=0$ implies $G_{\rm d}^{T}u=0$, we have $y\in{\rm
null}\,G_{\rm d}^{T}G_{\rm r}$ as can be seen from
\begin{eqnarray*}
G_{\rm d}^{T}G_{\rm r}y=G_{\rm d}^{T}G_{\rm r}\Lambda_{\rm r}G_{\rm
r}^{T}u=G_{\rm d}^{T}R_{1}u=\lambda G_{\rm d}^{T}u=0\,.
\end{eqnarray*}
To show the other direction, suppose now that there exists a nonzero
vector $x$ simultaneously satisfying ${\rm sgn}(x)={\rm sgn}(G_{\rm
r}^{T}G_{\rm r}x)$ and $G_{\rm d}^{T}G_{\rm r}x=0$. Let
$D_{2}\in{\rm lap}\,(\V,\,\B_{\rm d})$ be an arbitrary laplacian.
Define $v=G_{\rm r}x$. First note that $v\notin{\rm
span}\,\{\one_{q}\}$; for, otherwise, we would have $G_{\rm
r}^{T}v=0$ (since $G_{\rm r}^{T}\one_{q}=0$) and the following
contradiction would emerge
\begin{eqnarray*}
0={\rm sgn}(G_{\rm r}^{T}v)={\rm sgn}(G_{\rm r}^{T}G_{\rm r}x)={\rm
sgn}(x)\neq0
\end{eqnarray*}
because $x\neq 0$. Also, $G_{\rm d}^{T}v=G_{\rm d}^{T}G_{\rm r}x=0$,
yielding $D_{2}v=0$. Hence, we can write $v\in{\rm
null}\,D_{2}\setminus{\rm span}\,\{\one_{q}\}$. Let now $\Lambda$ be
a diagonal matrix with positive diagonal entries satisfying
$x=\Lambda G_{\rm r}^{T}G_{\rm r}x$. Such $\Lambda$ exists since
${\rm sgn}(x)={\rm sgn}(G_{\rm r}^{T}G_{\rm r}x)$. Define
$R_{2}=G_{\rm r}\Lambda G_{\rm r}^{T}$. Note that $R_{2}\in{\rm
lap}\,(\V,\,\B_{\rm r})$. Also, $v$ is an eigenvector of $R_{2}$
because we can write $R_{2}v=G_{\rm r}\Lambda G_{\rm r}^{T}G_{\rm
r}x=G_{\rm r}x=v$. Then by Lemma~\ref{fact:two} we have ${\rm
Re}\,\lambda_{2}(D_{2}+jR_{2})\leq 0$. Consequently, $\G$ is not
SSS.
\end{proof}

\section{Examples of SSS networks}

We begin this section by examining some example topologies from the
point of view of strong structural synchronization. Then we
establish certain generalizations valid for special classes of
interconnections. Henceforth, for simplicity, we consider triples
$(\V,\,\B_{\rm d},\,\B_{\rm r})$ with disjoint edge sets only, i.e.,
we let $\B_{\rm d}\cap\B_{\rm r}=\emptyset$. We emphasize that here
generality is not compromised in exchange for simplicity because
strong structural synchronization property of an arbitrary
interconnection can always be studied through one with disjoint edge
sets. More precisely:

\begin{proposition}
The triple $(\V,\,\B_{\rm d},\,\B_{\rm r})$ is SSS if and only if
$(\V,\,\B_{\rm d},\,\B_{\rm r}\setminus\B_{\rm d})$ is.
\end{proposition}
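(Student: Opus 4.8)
The plan is to translate strong structural synchronization into a weight-free condition via Lemma~\ref{fact:two}, exploiting that the null space of a laplacian is determined by its edge set alone, not by the edge weights. Put $\B_{\rm r}':=\B_{\rm r}\setminus\B_{\rm d}$ and let $G_{\rm d}$ be the incidence matrix of $(\V,\B_{\rm d})$. Every $D\in{\rm lap}(\V,\B_{\rm d})$ satisfies $D\one_q=0$ and ${\rm null}\,D={\rm null}\,G_{\rm d}^{T}$, the latter being independent of the chosen weights (and equal to $\Real^q$ when $\B_{\rm d}=\emptyset$). Hence, by Lemma~\ref{fact:two}, $(\V,\B_{\rm d},\B_{\rm r})$ is SSS if and only if no $R\in{\rm lap}(\V,\B_{\rm r})$ possesses an eigenvector in $\N:={\rm null}\,G_{\rm d}^{T}\setminus{\rm span}\{\one_q\}$; running the same reasoning with $\B_{\rm r}'$ in place of $\B_{\rm r}$, the triple $(\V,\B_{\rm d},\B_{\rm r}')$ is SSS if and only if no $R'\in{\rm lap}(\V,\B_{\rm r}')$ possesses an eigenvector in the \emph{same} set $\N$. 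So it suffices to show that these two statements are equivalent.

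The key observation is that any $u\in{\rm null}\,G_{\rm d}^{T}$ is annihilated by the rank-one contribution of every edge of $\B_{\rm d}$: if $\beta=\{\nu_k,\nu_\ell\}\in\B_{\rm d}$ and $b_\beta=e_k-e_\ell$, then $b_\beta^{T}u=0$, so $b_\beta b_\beta^{T}u=0$. Now any $R\in{\rm lap}(\V,\B_{\rm r})$ can be written as $R=R'+S$, where $R'\in{\rm lap}(\V,\B_{\rm r}')$ gathers the rank-one terms of the edges of $\B_{\rm r}$ lying outside $\B_{\rm d}$ (with $R'=0$ when $\B_{\rm r}'=\emptyset$) and $S=\sum_{\beta\in\B_{\rm r}\cap\B_{\rm d}}w_\beta\,b_\beta b_\beta^{T}$ with $w_\beta>0$; conversely, every $R'\in{\rm lap}(\V,\B_{\rm r}')$ extends to such an $R\in{\rm lap}(\V,\B_{\rm r})$ by assigning an arbitrary positive weight to each edge of $\B_{\rm r}\cap\B_{\rm d}$. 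Since $\B_{\rm r}\cap\B_{\rm d}\subseteq\B_{\rm d}$, the observation gives $Su=0$ for every $u\in{\rm null}\,G_{\rm d}^{T}$, hence $Ru=R'u$ for all such $u$; so a vector $u\in\N$ is an eigenvector of $R$ precisely when it is an eigenvector of $R'$, with the same eigenvalue. Both implications then follow by contraposition: if $(\V,\B_{\rm d},\B_{\rm r}')$ is not SSS, take $R'$ with an eigenvector $u\in\N$, extend $R'$ to some $R\in{\rm lap}(\V,\B_{\rm r})$ as above, and $u$ remains an eigenvector of $R$, so $(\V,\B_{\rm d},\B_{\rm r})$ is not SSS; conversely, if $(\V,\B_{\rm d},\B_{\rm r})$ is not SSS, take $R$ with an eigenvector $u\in\N$, decompose $R=R'+S$, and $u$ is an eigenvector of $R'$, so $(\V,\B_{\rm d},\B_{\rm r}')$ is not SSS.

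I do not anticipate a genuine obstacle here: once the reformulation via Lemma~\ref{fact:two} is in place, the argument reduces to the one-line identity $Ru=R'u$ on ${\rm null}\,G_{\rm d}^{T}$. The only care needed is bookkeeping in the degenerate cases --- when $\B_{\rm r}\cap\B_{\rm d}=\emptyset$ the claim is a tautology since $\B_{\rm r}'=\B_{\rm r}$; when $\B_{\rm r}'=\emptyset$ one uses ${\rm lap}(\V,\B_{\rm r}')=\{0\}$, so the ``$\B_{\rm r}'$-condition'' just asserts $\N=\emptyset$ --- together with a routine check that the summand $R'$ (resp.\ the extension $R$) lands in the correct laplacian set in each configuration, which is exactly the content of the two-way decomposition described above.
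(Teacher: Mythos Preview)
Your argument is correct, and it is genuinely different from the paper's. The paper first reduces both interconnections to SS ones via Theorem~\ref{thm:easy}, then invokes Theorem~\ref{thm:SSS} (the sign-pattern characterization ${\rm sgn}(G_{\rm r}^{T}G_{\rm r}x)={\rm sgn}(x)$ with $x\in{\rm null}\,G_{\rm d}^{T}G_{\rm r}\setminus\{0\}$) and works with block decompositions of $G_{\rm r}$ and $G_{\rm d}$ to show that any witness $x$ for one interconnection yields a witness for the other, splitting into the cases $\bar\B_{\rm r}=\emptyset$ and $\bar\B_{\rm r}\neq\emptyset$. You instead go one level lower, to Lemma~\ref{fact:two}, and exploit the single identity $Ru=R'u$ on ${\rm null}\,G_{\rm d}^{T}$, which follows immediately from the fact that every edge of $\B_{\rm r}\cap\B_{\rm d}$ contributes a rank-one term $b_\beta b_\beta^{T}$ with $b_\beta^{T}u=0$. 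This bypasses both Theorem~\ref{thm:SSS} and the block-matrix case analysis. What the paper's route buys is that it stays inside the machinery already built for the main SSS characterization; what your route buys is brevity and a more transparent reason for the equivalence: the laplacians $R$ and $R'$ are indistinguishable on the only subspace that matters.
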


\begin{proof}
Given $\G=(\V,\,\B_{\rm d},\,\B_{\rm r})$ define
$\bar\G=(\V,\,\B_{\rm d},\,\bar\B_{\rm r})$ with $\bar\B_{\rm
r}=\B_{\rm r}\setminus\B_{\rm d}$. Note that $\G$ is SS when (and
only when) $\bar\G$ is SS. This follows from Theorem~\ref{thm:easy}
and the fact that $\B_{\rm d}\cup\B_{\rm r}=\B_{\rm
d}\cup\bar\B_{\rm r}$. Since structural synchronization is a
necessary condition for strong structural synchronization, we shall
focus on the case where the interconnections $\G$ and $\bar \G$ are
SS. Let $G_{\rm d}$, $G_{\rm r}$, and $\bar G_{\rm r}$ be the
incidence matrices of the graphs $(\V,\,\B_{\rm d})$, $(\V,\,\B_{\rm
r})$, and $(\V,\,\bar\B_{\rm r})$, respectively. Since $\bar\B_{\rm
r}\subset\B_{\rm r}$ and $\B_{\rm r}\setminus\bar\B_{\rm
r}\subset\B_{\rm d}$ we can find matrices $A,\,B$ and write $G_{\rm
r}=[\bar G_{\rm r}\ \, B]$ and $G_{\rm d}=[A\ \, B]$. When $B$ is
the empty matrix, the result trivially follows; for then we have
$G_{\rm r}=\bar G_{\rm r}$, yielding $\G=\bar\G$. Therefore we
henceforth let $B$ have at least one column. We now consider two
cases. {\em Case~1: $\bar\B_{\rm r}=\emptyset$.} That $\bar\B_{\rm
r}$ is empty has two immediate consequences. One of them is that
$\bar\G$ is SSS by Theorem~\ref{thm:SSS}, the other is $G_{\rm
r}=B$. Suppose now that $\G$ is not SSS. Then by
Theorem~\ref{thm:SSS} we can find a nonzero vector $x$
simultaneously satisfying $0=G_{\rm d}^{T}G_{\rm r}x=[A\ \,
B]^{T}Bx$ and $x\equiv G_{\rm r}^{T}G_{\rm r}x=B^{T}Bx$. The first
equation yields $B^{T}Bx=0$. Combining this with the second one
implies $x\equiv 0$, resulting in the contradiction $x=0$. Hence
$\G$ must be SSS. {\em Case~2: $\bar\B_{\rm r}\neq\emptyset$.}
Suppose $\G$ fails to be SSS. Then there exists a nonzero vector $x$
satisfying $G_{\rm d}^{T}G_{\rm r}x=0$ and $G_{\rm r}^{T}G_{\rm
r}x\equiv x$. Employing the partitioning $x=[x_{1}^{T} \
x_{2}^{T}]^{T}$ we can rewrite these equations as
\begin{eqnarray}
\left[\begin{array}{cc}A^{T}\bar G_{\rm r}&A^{T}B\\ B^{T}\bar G_{\rm r}&B^{T}B\end{array}\right]
\left[\begin{array}{c}x_{1}\\x_{2}\end{array}\right]&=&0\,,\label{eqn:belikewater1}\\
\left[\begin{array}{cc}\bar G_{\rm r}^{T}\bar G_{\rm r}&\bar G_{\rm r}^{T}B\\
B^{T}\bar G_{\rm
r}&B^{T}B\end{array}\right]
\left[\begin{array}{c}x_{1}\\x_{2}\end{array}\right]&\equiv&\left[\begin{array}{c}x_{1}\\x_{2}\end{array}\right]\,.\label{eqn:belikewater2}
\end{eqnarray}
Now, \eqref{eqn:belikewater1} yields $B^{T}\bar G_{\rm
r}x_{1}+B^{T}Bx_{2}=0$ while $B^{T}\bar G_{\rm
r}x_{1}+B^{T}Bx_{2}\equiv x_{2}$ by \eqref{eqn:belikewater2}.
Combining these we obtain $x_{2}=0$. Since $x$ was nonzero, we have
to have $x_{1}\neq 0$. Moreover, in the light of $x_{2}=0$, the
equations \eqref{eqn:belikewater1} and \eqref{eqn:belikewater2} can
be reduced to
\begin{eqnarray*}
0&=&[A\ \ B]^{T}\bar G_{\rm r}x_{1}=G_{\rm d}^{T}\bar G_{\rm
r}x_{1}\,,\\
x_{1}&\equiv&\bar G_{\rm r}^{T}\bar G_{\rm r}x_{1}\,.
\end{eqnarray*}
Thence we deduce by Theorem~\ref{thm:SSS} that $\bar\G$ cannot be
SSS because $x_{1}$ is nonzero. It is not difficult to see that the
steps we have taken can be traced back. That is, SSS of $\bar G$
implies SSS for $\G$. The proof is therefore complete.
\end{proof}
\vspace{0.12in}

\noindent {\bf Example~1.} As our first example, let us recall the
four-node interconnection (for which we let
$\V=\{\nu_{1},\,\nu_{2},\,\nu_{3},\,\nu_{4}\}$ be the vertex set) we
visited earlier in the paper, reproduced in Fig.~\ref{fig:graph1},
where the resistor represents the (only) edge in the set $\B_{\rm
d}=\{\{\nu_{1},\,\nu_{3}\}\}$ and the inductors represent by the
edges in $\B_{\rm
r}=\{\{\nu_{1},\,\nu_{2}\},\,\{\nu_{2},\,\nu_{3}\},\,\{\nu_{3},\,\nu_{4}\}\}$.
By Theorem~\ref{thm:easy} this interconnection $(\V,\,\B_{\rm
d},\,\B_{\rm r})$ is clearly SS. To check whether it is also SSS,
let us apply the test presented in Theorem~\ref{thm:SSS}.
\begin{figure}[h]
\begin{center}
\includegraphics[scale=0.4]{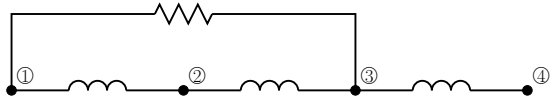}
\caption{An interconnection that is not SSS. The edges in $\B_{\rm
d}$ are depicted as resistors; the edges in $\B_{\rm r}$ are
depicted as inductors.}\label{fig:graph1}
\end{center}
\end{figure}
Note that the incidence matrices associated to $(\V,\,\B_{\rm
d},\,\B_{\rm r})$ read
\begin{eqnarray*}
G_{\rm d}=\left[\begin{array}{r}1\\ 0\\ -1\\ 0\end{array}\right]\,
,\qquad G_{\rm r}=\left[\begin{array}{rrr}1&0&0\\-1&1&0\\0&-1&1\\
0&0&-1\end{array}\right]\,.
\end{eqnarray*}
Choosing, for instance, the vector $x=[2\ -1\ 1]^{T}$ yields $G_{\rm
r}x=[2\ -3\ \,2\ -1]^{T}$, whence follows $G_{\rm r}^{T}G_{\rm
r}x=[5\ -5\ 3]^{T}$ and, consequently, $G_{\rm r}^{T}G_{\rm
r}x\equiv x$. Moreover, we have $G_{\rm d}^{T}G_{\rm r}x=0$.
Therefore the interconnection is not SSS. \vspace{0.12in}

A somewhat physical interpretation of the above equations is
possible in the following way. Let the $i$th column of the incidence
matrix $G_{\rm r}$ be $[e_{k}-e_{\ell}]$. Then if the $i$th entry of
the vector $x$ is $c_{k\ell}$, we can say a {\em current} of value
$c_{k\ell}$ flows through the edge
$\{\nu_{k},\,\nu_{\ell}\}\in\B_{\rm r}$ from $\nu_{k}$ to
$\nu_{\ell}$; or, equivalently, a current of value $-c_{k\ell}$
flows from $\nu_{\ell}$ to $\nu_{k}$. That is, the direction of any
current can be reversed by changing the sign of its value. This
allows us, without loss of generality, to consider only the case
where all currents are nonnegative. As for the edges that belong to
$\B_{\rm d}$, we assign them zero currents, which makes their
directions immaterial. Moreover, the mapping $x\mapsto G_{\rm r}x$
can be interpreted as that the edge currents ($x$) generate the
vertex {\em potentials} ($G_{\rm r}x$) through the rule:
\begin{itemize}
\item[\bf (A1)] The potential of a vertex equals the sum of outgoing currents
minus the sum of incoming currents associated to that vertex.
\end{itemize}

\begin{remark}\label{rem:zerosum}
A direct implication of (A1) is that the vertex potentials
throughout any interconnection always add up to zero.
\end{remark}

Having defined edge currents and vertex potentials now we can
interpret the constraints $G_{\rm d}^{T}G_{\rm r}x=0$ and $G_{\rm
r}^{T}G_{\rm r}x\equiv x$ easily. The former means:
\begin{itemize}
\item[\bf (A2)] The potentials of any two vertices that are connected through an
edge that belongs to $\B_{\rm d}$ are equal.
\end{itemize}
Whereas $G_{\rm r}^{T}G_{\rm r}x\equiv x$ is equivalent to:
\begin{itemize}
\item[\bf (A3)] If two vertices with different potentials are
connected by an edge then the current on that edge is positive and
flows from higher potential to lower potential. And if two vertices
have the same potential then the current through the edge that
connects them is zero.
\end{itemize}
The edge currents ($[2\ -1\ 1]^{T}=x$) and the vertex potentials
($[2\ -3\ \,2\ -1]^{T}=G_{\rm r}x$) we have used in our example are
shown in Fig.~\ref{fig:graph4}, where we observe that all three
conditions (A1), (A2), (A3) are satisfied. Let us now formalize our
observations.
\begin{figure}[h]
\begin{center}
\includegraphics[scale=0.4]{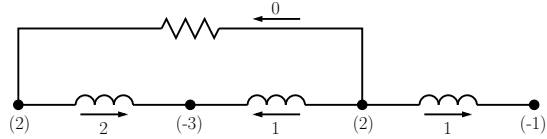}
\caption{The vertex potentials (shown in brackets) and edge currents
of Example~1.}\label{fig:graph4}
\end{center}
\end{figure}

\begin{definition}
Given an SS interconnection $(\V,\,\B_{\rm d},\,\B_{\rm r})$ let
$\D$ denote some {\em (compatible) distribution} where each edge is
assigned a (nonnegative) current and each vertex a potential;
obeying the rules (A1), (A2), and (A3). The distribution $\D$ is
said to be {\em nontrivial} if it has at least one nonzero current,
otherwise it is called {\em trivial}.
\end{definition}

This definition lets us make a restatement of Theorem~\ref{thm:SSS}:

\begin{theorem}\label{thm:SSS2}
An SS interconnection is SSS if and only if it does not admit a
nontrivial distribution.
\end{theorem}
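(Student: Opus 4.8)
The plan is to recognize Theorem~\ref{thm:SSS2} as a pure change of vocabulary: a nontrivial distribution is simply a pictorial encoding of a nonzero vector $x$ meeting the two algebraic conditions that appear in Theorem~\ref{thm:SSS}. Concretely, I would fix the incidence matrices $G_{\rm d}$ and $G_{\rm r}$ of the graphs $(\V,\,\B_{\rm d})$ and $(\V,\,\B_{\rm r})$, assign each column a reference orientation, and prove that an SS interconnection $(\V,\,\B_{\rm d},\,\B_{\rm r})$ admits a nontrivial distribution if and only if $\B_{\rm r}\neq\emptyset$ and there is some $x\in{\rm null}\,G_{\rm d}^{T}G_{\rm r}\setminus\{0\}$ with $G_{\rm r}^{T}G_{\rm r}x\equiv x$. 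Granting this equivalence, Theorem~\ref{thm:SSS} closes the argument instantly, since its criterion for SSS is exactly the negation of ``$\B_{\rm r}\neq\emptyset$ and such an $x$ exists'' -- the remaining sub-case $\B_{\rm r}=\emptyset$ being precisely the one in which an SS interconnection is automatically SSS and, just as trivially, admits no distribution with a nonzero current.

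For the forward direction I would take a nontrivial distribution $\D$ and read off the vector $x\in\Real^{|\B_{\rm r}|}$ whose $i$th entry is the current carried by the edge of $\B_{\rm r}$ associated with the $i$th column of $G_{\rm r}$, signed $+$ when that current runs along the column's reference orientation and $-$ otherwise; the edges of $\B_{\rm d}$ carry zero current, as forced by (A2) and (A3). Then rule (A1) says precisely that the vector of vertex potentials equals $G_{\rm r}x$ (the $\B_{\rm d}$ edges contributing nothing), rule (A2) says $G_{\rm d}^{T}(G_{\rm r}x)=0$, i.e.\ $x\in{\rm null}\,G_{\rm d}^{T}G_{\rm r}$, and rule (A3) restricted to $\B_{\rm r}$ says that across each such edge the sign of the potential drop -- which is the matching entry of $G_{\rm r}^{T}G_{\rm r}x$ -- equals the sign of the edge current, i.e.\ $G_{\rm r}^{T}G_{\rm r}x\equiv x$ (on $\B_{\rm d}$ edges (A3) is automatic from (A2)). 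Nontriviality forces $x\neq0$, hence also $\B_{\rm r}\neq\emptyset$. The converse just runs this construction in reverse: given $\B_{\rm r}\neq\emptyset$ and such an $x\neq0$, declare the potentials to be $G_{\rm r}x$, put current $|x_{i}|$ on the $i$th edge of $\B_{\rm r}$ directed according to $\sgn(x_{i})$, and zero current on every edge of $\B_{\rm d}$; the same three identities verify (A1)--(A3), and $x\neq0$ makes the distribution nontrivial.

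The one step that deserves a line of care -- and the nearest thing to an obstacle -- is reconciling the nonnegativity of currents demanded by the definition of a distribution with the fact that the $x$ supplied by Theorem~\ref{thm:SSS} need not have nonnegative entries relative to the fixed reference orientation of $G_{\rm r}$. This is harmless: reversing the reference orientation of a column of $G_{\rm r}$ flips the sign of the corresponding entry of $x$ and of the corresponding entry of $G_{\rm r}^{T}G_{\rm r}x$ simultaneously, while leaving $G_{\rm r}x$ and $G_{\rm d}^{T}G_{\rm r}x$ unchanged; hence the two algebraic conditions are orientation-independent and one may re-orient each edge of $\B_{\rm r}$ so that its current is nonnegative. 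Apart from this bookkeeping the proof is entirely definitional, so I anticipate no real difficulty -- all the analytic content was already spent in Theorem~\ref{thm:SSS} and, upstream of it, in Lemmas~\ref{lem:five}, \ref{fact:one}, and \ref{fact:two}.
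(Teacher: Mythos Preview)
Your proposal is correct and matches the paper's own treatment: the paper presents Theorem~\ref{thm:SSS2} explicitly as a ``restatement'' of Theorem~\ref{thm:SSS}, with the dictionary between distributions and vectors $x$ (via the rules (A1)--(A3)) spelled out in the paragraphs immediately preceding the theorem rather than in a separate proof. Your write-up simply formalizes that dictionary, including the orientation bookkeeping, which the paper leaves implicit.
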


\begin{figure}[h]
\begin{center}
\includegraphics[scale=0.4]{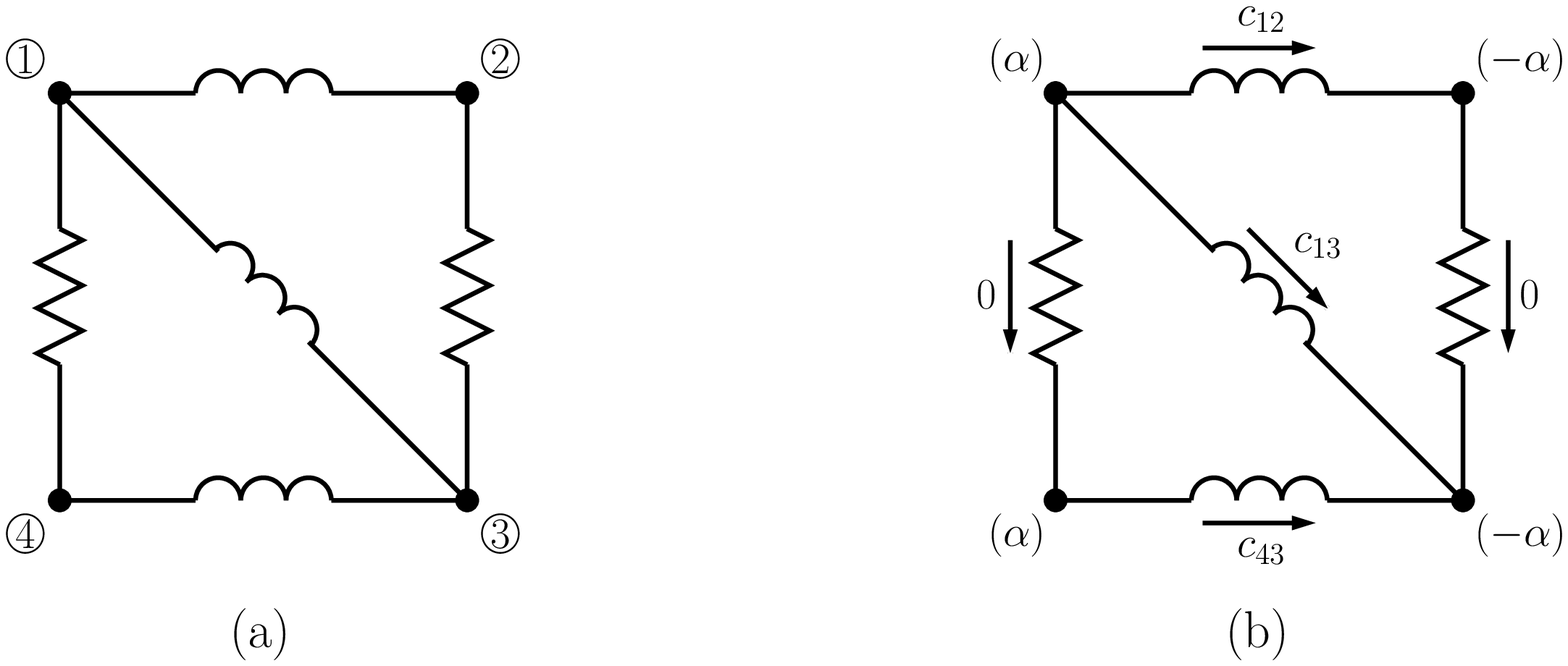}
\caption{An SSS interconnection. (The edges in $\B_{\rm d}$ are
depicted as resistors; the edges in $\B_{\rm r}$ are depicted as
inductors.)}\label{fig:graph2}
\end{center}
\end{figure}

\noindent{\bf Example~2.} This time consider the interconnection
shown in Fig.~\ref{fig:graph2}a. Let us analyze it using our last
theorem. Let $\D$ be an arbitrary distribution, where the potential
$p_{1}$ of the vertex ${\textcircled{\raisebox{-.9pt} {1}}}$ is
$\alpha$. Then by (A2) we have $p_{4}=p_{1}=\alpha$, whence follows,
by Remark~\ref{rem:zerosum} and (A2), $p_{2}=p_{3}=-\alpha$. These
vertex potentials are shown in Fig.~\ref{fig:graph2}b. Let us now
focus on the edge currents $c_{12},\,c_{13},\,c_{43}$ shown in the
same figure. Applying rule (A1) on the second and fourth vertices
yields $c_{12}=\alpha$ and $c_{43}=\alpha$, respectively. Finally,
we apply it on the first vertex and obtain $c_{12}+c_{13}=\alpha$,
whence we deduce $c_{13}=0$. Now, that the current $c_{13}$ is zero
implies by (A3) that the first and third vertices must have the same
potential $p_{1}=p_{3}$, i.e., $\alpha=-\alpha$, yielding
$\alpha=0$. This means that the distribution $\D$ is trivial.
Theorem~\ref{thm:SSS2} then tells us that the interconnection is
SSS. \vspace{0.12in}

Further examples are given in Fig.~\ref{fig:graph3}, where a
nontrivial current distribution (from which, by (A1), the potential
distribution can be determined) is provided for the instances that
are not SSS. In the light of Theorem~\ref{thm:SSS2} we next provide
certain graph theoretical conditions that guarantee SSS for special
types of topologies; namely, path graphs, cycles, and trees. But
this requires a quick review of some graph terminology first.

\begin{figure}[h]
\begin{center}
\includegraphics[scale=0.4]{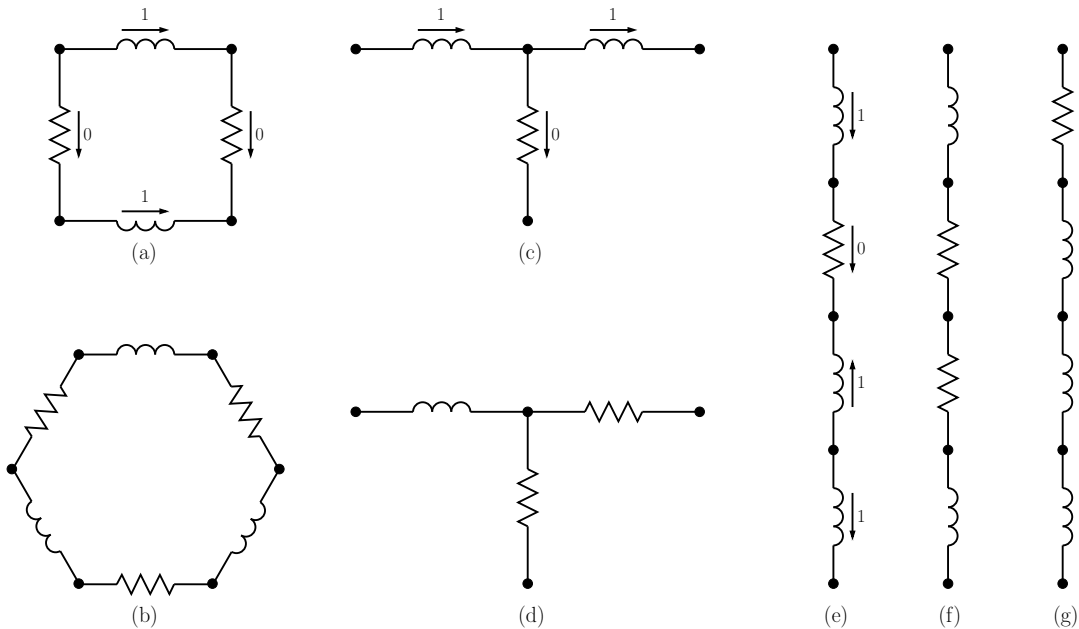}
\caption{Various interconnections. The topologies (b), (d), (f), and
(g) are SSS; whereas (a), (c), and (e) are not SSS. (The edges in
$\B_{\rm d}$ are depicted as resistors; the edges in $\B_{\rm r}$
are depicted as inductors.)}\label{fig:graph3}
\end{center}
\end{figure}

Let $\Gamma=(\V,\,\B)$ be a graph with the vertex set
$\V=\{\nu_{1},\,\nu_{2},\,\ldots,\,\nu_{q}\}$ and $q\geq 2$. The
graph $\Gamma$ is a {\em path} if the edge set can be written as
$\B=\{\{\nu_{1},\,\nu_{2}\},\,\{\nu_{2},\,\nu_{3}\},\,\ldots,\,\{\nu_{q-1},\,\nu_{q}\}\}$
perhaps after a relabeling of the vertices; it is a {\em cycle} if
$\B=\{\{\nu_{1},\,\nu_{2}\},\,\{\nu_{2},\,\nu_{3}\},\,\ldots,\,\{\nu_{q-1},\,\nu_{q}\},\,\{\nu_{q},\,\nu_{1}\}\}$.
Namely, the incidence matrices of a path and a cycle have the
following structures
\begin{eqnarray*}
G_{\rm path}=\left[\begin{array}{rrrrr} 1&0&0&\cdots&0\\
-1&1&0&\cdots&0\\
0&-1&1&\cdots&0\\
\vdots&\vdots&\vdots&\ddots&\vdots\\
0&0&0&\cdots&1\\
0&0&0&\cdots&-1
\end{array}\right]_{q\times(q-1)}\,,\qquad
G_{\rm cycle}=\left[\begin{array}{rrrrrr} 1&0&0&\cdots&0&-1\\
-1&1&0&\cdots&0&0\\
0&-1&1&\cdots&0&0\\
\vdots&\vdots&\vdots&\ddots&\vdots&\vdots\\
0&0&0&\cdots&1&0\\
0&0&0&\cdots&-1&1
\end{array}\right]_{q\times q}\,.
\end{eqnarray*}
Note that the $ij$th entry of $G_{\rm path}$ reads
\begin{eqnarray*}
[G_{\rm path}]_{ij}=\left\{\begin{array}{rl} -1&\mbox{for}\ \
i=j+1\,,\\ 1&\mbox{for}\ \ i=j\,,\\ 0&\mbox{elsewhere}\,.
\end{array}\right.
\end{eqnarray*}
 A {\em tree} is a
generalization of path in the sense that its incidence matrix
$G_{\rm tree}$ satisfies (perhaps after a suitable relabeling)
\begin{eqnarray*}
[G_{\rm tree}]_{ij}=\left\{\begin{array}{cl} -1&\mbox{for}\ \ i=j+1\,,\\
0\ \mbox{or}\ 1&\mbox{for}\ \ i\leq j\,,\\ 0&\mbox{elsewhere}\,,
\end{array}\right.
\end{eqnarray*}
in addition to the usual constraint that each column is of the form
$[e_{k}-e_{\ell}]$. A vertex $\nu_{i}$ of a tree is called a {\em
leaf} if the $i$th row of the incidence matrix contains only a
single nonzero entry. For instance, a path has exactly two leaves:
first and the last vertices. Henceforth for a given $(\V,\,\B_{\rm
d},\,\B_{\rm r})$ we denote by $[\B_{\rm r}]\subset\V$ the set of
vertices that are associated to the edges in $\B_{\rm r}$. That is,
$[\B_{\rm r}]=\{\nu\in\V:\nu\in\{\nu_{i},\,\nu_{j}\}\in\B_{\rm
r}\}$. The set $[\B_{\rm d}]$ is defined similarly. We now list some
straightforward consequences of Theorem~\ref{thm:SSS2}.

\begin{corollary}\label{cor:path}
Let the interconnection $\G=(\V,\,\B_{\rm d},\,\B_{\rm r})$ be such
that $\B_{\rm d}\cap\B_{\rm r}=\emptyset$ and the graph
$(\V,\,\B_{\rm d}\cup\B_{\rm r})$ is a path. Then $\G$ is SSS if and
only if $\V\setminus[\B_{\rm r}]$ is not empty.
\end{corollary}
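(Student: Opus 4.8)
The plan is to run everything through the reformulation in Theorem~\ref{thm:SSS2}. Relabel the vertices so that the path reads $\nu_1-\nu_2-\cdots-\nu_q$ with edges $\beta_i=\{\nu_i,\nu_{i+1}\}$, each of which lies in exactly one of $\B_{\rm d}$, $\B_{\rm r}$ (recall $\B_{\rm d}\cap\B_{\rm r}=\emptyset$). If $\B_{\rm d}=\emptyset$ then every $\beta_i\in\B_{\rm r}$, so $[\B_{\rm r}]=\V$, so $\V\setminus[\B_{\rm r}]=\emptyset$, and $\G$ is not even SS by Theorem~\ref{thm:easy}; both sides of the asserted equivalence are then false and we may assume $\B_{\rm d}\neq\emptyset$. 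The path being connected, Theorem~\ref{thm:easy} now makes $\G$ SS, so by Theorem~\ref{thm:SSS2} it suffices to decide whether $\G$ admits a nontrivial distribution. Given a distribution, orient $\beta_i$ from $\nu_i$ to $\nu_{i+1}$, write $c_i$ for its signed current (with $c_i=0$ if $\beta_i\in\B_{\rm d}$) and $p_i$ for the potential of $\nu_i$. Applying rule (A1) successively along the path gives $c_i=S_i:=p_1+\cdots+p_i$ for every $i$, with $S_0=0$ and (by Remark~\ref{rem:zerosum}) $S_q=0$; rules (A2)--(A3) then become: for $\beta_i\in\B_{\rm d}$, $S_i=0$ and $S_{i-1}+S_{i+1}=0$; for $\beta_i\in\B_{\rm r}$, $\sgn(S_i)=\sgn(2S_i-S_{i-1}-S_{i+1})$. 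The distribution is trivial exactly when $S_i=0$ for all $i$.

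\emph{The ``if'' direction.} Suppose some $\nu_m\notin[\B_{\rm r}]$, i.e.\ every edge at $\nu_m$ lies in $\B_{\rm d}$. The key remark is that for \emph{every} edge $\beta_i$ one has $S_i=0\Rightarrow S_{i+1}=-S_{i-1}$: this is immediate when $\beta_i\in\B_{\rm d}$, and when $\beta_i\in\B_{\rm r}$ the vanishing of $S_i$ forces $\sgn(2S_i-S_{i-1}-S_{i+1})=0$, hence $S_{i+1}=-S_{i-1}$. Therefore, as soon as two consecutive partial sums vanish, say $S_{j-1}=S_j=0$, this implication applied forward to $\beta_j,\beta_{j+1},\dots$ gives $S_k=0$ for all $k\ge j-1$, and applied backward to $\beta_{j-1},\beta_{j-2},\dots$ gives $S_k=0$ for all $k\le j$; thus every $S_k$ vanishes. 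Such a pair of consecutive zeros is exactly what the hypothesis on $\nu_m$ provides: if $m=1$ then $\beta_1\in\B_{\rm d}$ gives $S_1=0=S_0$; if $m=q$ then $\beta_{q-1}\in\B_{\rm d}$ gives $S_{q-1}=0=S_q$; and if $1<m<q$ then $\beta_{m-1},\beta_m\in\B_{\rm d}$ give $S_{m-1}=S_m=0$. Hence every distribution is trivial and, by Theorem~\ref{thm:SSS2}, $\G$ is SSS.

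\emph{The ``only if'' direction (contrapositive).} Assume $\V\setminus[\B_{\rm r}]=\emptyset$. From $\nu_1,\nu_q\in[\B_{\rm r}]$ we get $\beta_1,\beta_{q-1}\in\B_{\rm r}$, and from the interior vertices that no two consecutive edges lie in $\B_{\rm d}$; so the $\B_{\rm d}$-edges form a set $\{\beta_{k_1},\dots,\beta_{k_m}\}$ with $2\le k_1<\cdots<k_m\le q-2$ and $k_{j+1}\ge k_j+2$ (in particular $q\ge4$, and $m\ge1$). Put $k_0:=0$, $k_{m+1}:=q$, so each interval $[k_{j-1},k_j]$ has length $\ge2$, and define $(S_0,\dots,S_q)$ block by block by $S_i=(-1)^{j+1}\lambda_j(i-k_{j-1})(k_j-i)$ for $k_{j-1}\le i\le k_j$, with constants $\lambda_j>0$ to be chosen. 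Each block vanishes at its endpoints (so $S_{k_j}=0$, as the $\B_{\rm d}$-constraint demands), keeps the constant sign $(-1)^{j+1}$ strictly inside, and has constant second difference of that same sign, which yields $\sgn(2S_i-S_{i-1}-S_{i+1})=(-1)^{j+1}=\sgn(S_i)$ at every index interior to a block --- i.e.\ the $\B_{\rm r}$-constraint holds. It remains to honour $S_{k_j+1}=-S_{k_j-1}$ at each cut $\beta_{k_j}$: take $\lambda_1=1$, and, $\lambda_{j-1}$ being fixed, choose $\lambda_j$ so that the $j$-th block's value at $i=k_{j-1}+1$, namely $(-1)^{j+1}\lambda_j(k_j-k_{j-1}-1)$, equals $-S_{k_{j-1}-1}$; since $k_j-k_{j-1}-1\ge1$ and $\sgn(S_{k_{j-1}-1})=(-1)^{j}$, this gives a well-defined $\lambda_j=|S_{k_{j-1}-1}|/(k_j-k_{j-1}-1)>0$. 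The resulting sequence is not identically zero (e.g.\ $S_1=k_1-1\ge1$), so assigning current $|S_i|$ to $\beta_i$ and potential $p_i=S_i-S_{i-1}$ to $\nu_i$ produces a nontrivial compatible distribution; by Theorem~\ref{thm:SSS2}, $\G$ is not SSS.

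I expect the third step to carry the heaviest bookkeeping: one must check that the concatenated parabolic profile meets (A1) at every vertex, (A2) at every dissipative edge, and (A3) at every restorative edge simultaneously, and the inductive choice of the $\lambda_j$ is precisely the mechanism that makes the sign reversal $S_{k_j+1}=-S_{k_j-1}$ forced across each dissipative edge compatible with the (scale) freedom inside each restorative block. It is worth flagging that the non-adjacency of the $\B_{\rm d}$-edges --- equivalently, $\V\setminus[\B_{\rm r}]=\emptyset$ --- is exactly what guarantees every block $[k_{j-1},k_j]$ contains an interior vertex, i.e.\ $k_j-k_{j-1}\ge2$, without which the construction would collapse to the zero sequence and the argument would (correctly) fail.
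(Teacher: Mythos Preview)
Your proof is correct and follows exactly the route the paper intends: the corollary is stated there as a ``straightforward consequence of Theorem~\ref{thm:SSS2}'' with no further argument, and you have supplied those details faithfully by translating the distribution rules (A1)--(A3) into constraints on the partial sums $S_i$ and then (for the contrapositive) building an explicit nontrivial distribution via alternating parabolic blocks. The bookkeeping --- in particular the inductive choice of the $\lambda_j$ to enforce $S_{k_j-1}+S_{k_j+1}=0$ at each dissipative edge, and the observation that $\V\setminus[\B_{\rm r}]=\emptyset$ forces every block to have length at least two --- is handled cleanly.
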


\begin{corollary}\label{cor:cycle}
Let the interconnection $\G=(\V,\,\B_{\rm d},\,\B_{\rm r})$ be such
that $\B_{\rm d}\cap\B_{\rm r}=\emptyset$ and the graph
$(\V,\,\B_{\rm d}\cup\B_{\rm r})$ is a cycle with $q$ nodes. Then
$\G$ is SSS if and only if
\begin{enumerate}
\item Either $\V\setminus[\B_{\rm r}]$ is not empty,
\item Or else $\V\setminus[\B_{\rm d}]$ is empty and $q/2$ is odd.
\end{enumerate}
\end{corollary}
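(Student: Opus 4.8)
The plan is to apply Theorem~\ref{thm:SSS2} after writing out explicitly what a distribution on a cycle looks like. If $\B_{\rm d}=\emptyset$ the cycle is purely restorative, hence not SS by Theorem~\ref{thm:easy} and therefore not SSS; and since then $[\B_{\rm r}]=\V$ while $[\B_{\rm d}]=\emptyset$, both clauses of the asserted characterization fail, so there is nothing to prove. Assume henceforth $\B_{\rm d}\neq\emptyset$; the cycle being connected, $\G$ is SS, and by Theorem~\ref{thm:SSS2} it suffices to decide whether $\G$ admits a nontrivial distribution.

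First I would fix coordinates: label the edges $\beta_1,\ldots,\beta_q$ cyclically with $\beta_k=\{\nu_k,\nu_{k+1}\}$ (indices mod $q$), orient each $\beta_k$ from $\nu_k$ to $\nu_{k+1}$, and let $f_k$ be the current on $\beta_k$, with $f_k=0$ whenever $\beta_k\in\B_{\rm d}$. By (A1) the potential of $\nu_k$ is $p_k=f_k-f_{k-1}$; hence (A2) on a dissipative edge $\beta_k$ becomes $f_{k-1}+f_{k+1}=0$, and (A3) on a restorative edge $\beta_k$ becomes $\sgn(f_k)=\sgn(2f_k-f_{k-1}-f_{k+1})$. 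Thus a nontrivial distribution is precisely a nonzero $f\in\Real^{q}$ obeying these relations. The key elementary fact is a propagation lemma: if $f_j=f_{j+1}=0$ then $f\equiv0$. Indeed, examining the edge $\beta_{j+1}$: if it is dissipative, (A2) gives $f_{j+2}=-f_j=0$; if it is restorative, (A3) reads $0=\sgn(-f_{j+2})$, again $f_{j+2}=0$; running this around the cycle forces every $f_k$ to vanish.

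Now translate the set-theoretic hypotheses: $\V\setminus[\B_{\rm r}]\neq\emptyset$ means the cycle has two consecutive dissipative edges, and $\V\setminus[\B_{\rm d}]=\emptyset$ means it has no two consecutive restorative edges. There are three cases. (a) If $\beta_k,\beta_{k+1}$ are both dissipative, then $f_k=f_{k+1}=0$ and the propagation lemma gives $f\equiv0$: no nontrivial distribution, so $\G$ is SSS, matching the first clause. (b) If there are no two consecutive dissipative edges and no two consecutive restorative edges, the edges strictly alternate, $q=2r$ with $r$ edges of each type; the (A2) relations around the cycle force $f_{2i}=(-1)^{i-1}f_2$ and, closing up, $f_2=(-1)^{r}f_2$, while the (A3) relations are automatic since each restorative edge is flanked by two zero-current edges. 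Thus $f\equiv0$ if $r$ is odd and $f_2$ is free if $r$ is even, so $\G$ is SSS iff $q/2=r$ is odd, again as claimed.

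The remaining case — no two consecutive dissipative edges, but at least one maximal restorative run of length $\ell\ge2$ — is the substantive one, and where I expect the real work to lie: here one must produce a nonzero $f$, showing $\G$ is not SSS (consistent with both clauses then failing). The construction I would use places a discrete sinusoid on each maximal restorative run and glues the runs along the dissipative edges through the relations $f_{m_i-1}+f_{m_i+1}=0$. On a run of length $\ell_i$ the symmetric profile $\sin(\pi j/(\ell_i+1))$, $j=1,\ldots,\ell_i$, satisfies (A3) automatically — because $2\sin\theta-\sin(\theta-\delta)-\sin(\theta+\delta)=2(1-\cos\delta)\sin\theta$ preserves sign — and has equal boundary currents at its two ends; scaling run $i$ by a constant $v_i$, the gluing relations telescope to $v_i\propto(-1)^{i}$, which closes up consistently exactly when the number $r$ of runs is even. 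When $r$ is odd I would instead assign the available run of length $\ell\ge2$ the antisymmetric profile $\sin(2\pi j/(\ell+1))$, whose two boundary currents are negatives of one another; this flips the parity of the telescoped product and makes the whole system solvable with every $v_i\neq0$, hence with $f\neq0$. The obstacle here is essentially bookkeeping rather than conceptual: keeping orientations consistent, confirming the sine identities give strict sign agreement (including the indices at which a sine vanishes), and checking that the assembled current vector is genuinely nonzero. Combining the three cases yields the stated equivalence.
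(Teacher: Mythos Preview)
The paper does not actually supply a proof of this corollary; it is listed, together with the path and tree cases, as one of the ``straightforward consequences of Theorem~\ref{thm:SSS2}'' and is then merely illustrated on the examples in Fig.~\ref{fig:graph3}. Your proposal follows precisely the route the paper intends --- reduce to Theorem~\ref{thm:SSS2} and analyze distributions on the cycle --- and carries out the case analysis the paper omits. The three cases (two consecutive dissipative edges; strict alternation; a restorative run of length $\geq 2$) are exhaustive and correctly matched to the two clauses of the corollary, and the propagation lemma and the alternating-case parity computation are both sound.

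The one part that goes beyond what the paper calls ``straightforward'' is your constructive case~(c). Your sinusoidal profiles are a clean choice: the identity $2\sin\theta-\sin(\theta-\delta)-\sin(\theta+\delta)=2(1-\cos\delta)\sin\theta$ does give the sign condition~(A3) on each run (including at any interior zero of the antisymmetric profile, where both sides vanish), the symmetric profile has equal nonzero boundary currents, and the antisymmetric profile on a run of length $\ell\geq 2$ has boundary currents $\pm v\sin(2\pi/(\ell+1))\neq 0$. The gluing relations $b_{i-1}+a_i=0$ then telescope exactly as you describe, closing up for $r$ even with all-symmetric profiles and for $r$ odd once one run carries the antisymmetric profile. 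This is correct; the ``bookkeeping'' you flag is genuinely routine. So your argument is a complete proof along the line the paper gestures at but does not write out.
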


\begin{corollary}\label{cor:tree}
Let the interconnection $\G=(\V,\,\B_{\rm d},\,\B_{\rm r})$ be such
that $\B_{\rm d}\cap\B_{\rm r}=\emptyset$ and the graph
$\Gamma=(\V,\,\B_{\rm d}\cup\B_{\rm r})$ is a tree. Then $\G$ is SSS
if $[\B_{\rm r}]$ contains at most one leaf of $\Gamma$.
\end{corollary}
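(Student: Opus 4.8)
The plan is to apply Theorem~\ref{thm:SSS2}: I will first verify that $\G$ is SS, and then show that under the hypothesis the only compatible distribution is the trivial one, from which SSS follows.

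\emph{SS-ness.} A tree on $q\ge 2$ vertices has at least two leaves, and since $[\B_{\rm r}]$ contains at most one of them there is a leaf $\lambda\notin[\B_{\rm r}]$. The unique edge incident to $\lambda$ cannot belong to $\B_{\rm r}$ (else $\lambda\in[\B_{\rm r}]$), so it belongs to $\B_{\rm d}$; hence $\B_{\rm d}\neq\emptyset$. As $\Gamma=(\V,\,\B_{\rm d}\cup\B_{\rm r})$ is a tree and therefore connected, Theorem~\ref{thm:easy} yields that $\G$ is SS.

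\emph{Only the trivial distribution.} I would root $\Gamma$ at a vertex $\rho$ chosen as follows: if $[\B_{\rm r}]$ contains exactly one leaf, let $\rho$ be that leaf; otherwise let $\rho$ be any leaf. With this choice every leaf of $\Gamma$ other than $\rho$ lies outside $[\B_{\rm r}]$, so its unique incident edge --- the edge joining it to its parent --- belongs to $\B_{\rm d}$. For a non-root vertex $v$ write $\pi(v)$ for its parent and $e_v=\{v,\pi(v)\}$ for the joining edge, and recall that every edge of $\Gamma$ equals $e_v$ for exactly one non-root $v$. Fix a compatible distribution with potentials $p_v$ and edge currents (those on $\B_{\rm d}$-edges being $0$ by definition). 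The heart of the proof is the leaf-to-root claim: for every $v\neq\rho$ one has $p_v=0$ and the current on $e_v$ is $0$, which I would establish by induction from the leaves upward. If $v$ has no children it is a leaf of $\Gamma$ distinct from $\rho$, so $e_v\in\B_{\rm d}$, its current vanishes, and (A1) applied at $v$ (whose only incident edge is $e_v$) gives $p_v=0$. If $v$ has children $w_1,\dots,w_m$, the induction hypothesis makes every child-current $0$ and every $p_{w_i}=0$; applying (A1) at $v$ and discarding the vanishing child-currents shows that $p_v$ equals the net current carried by $e_v$ from $v$ toward $\pi(v)$, while comparing $v$ with $w_1$ across $e_{w_1}$ --- which is either in $\B_{\rm d}$, so (A2) applies, or in $\B_{\rm r}$ with zero current, so (A3) forces equal potentials --- gives $p_v=p_{w_1}=0$; hence the current on $e_v$ is $0$ as well. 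This closes the induction. Finally $p_\rho=0$, either by (A1) at $\rho$ (whose single incident edge now carries zero current) or directly from Remark~\ref{rem:zerosum}. Since the edges of $\Gamma$ are exactly the $e_v$, all currents are $0$, so the distribution is trivial, and Theorem~\ref{thm:SSS2} gives that $\G$ is SSS.

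\emph{Where the difficulty lies.} The entire argument is a routine propagation of (A1)--(A3) up the tree once the induction is set up; the one step that genuinely uses the hypothesis --- and the only place any care is needed --- is the choice of root, which must guarantee that after rooting every \emph{non-root} leaf avoids $[\B_{\rm r}]$, giving the induction a sound base. Because the stated condition is only sufficient (as the cycle and tree examples of Fig.~\ref{fig:graph3} indicate), no converse is attempted.
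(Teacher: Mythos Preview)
Your proof is correct. The paper does not give an explicit argument for this corollary, merely listing it among ``straightforward consequences of Theorem~\ref{thm:SSS2}''; your leaf-to-root induction is precisely the natural way to cash that phrase out, and both the verification of SS (via Theorem~\ref{thm:easy}) and the use of the contrapositive of (A3) to propagate zero potentials upward are clean and accurate.
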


Let us revisit the interconnections in Fig.~\ref{fig:graph3} in the
light of these corollaries. The interconnections (e), (f), and (g)
are paths for which the set $\V\setminus[\B_{\rm r}]$ is empty only
for (e). By Corollary~\ref{cor:path} therefore the path (e) is not
SSS, while the paths (f) and (g) are. The interconnections (a) and
(b) are cycles, and clearly $\V\setminus[\B_{\rm r}]$ is empty for
both. In this case Corollary~\ref{cor:cycle} advises us to check
$\V\setminus[\B_{\rm d}]$, which too is empty for both. It is not
difficult to see that when $\V\setminus[\B_{\rm r}]$ and
$\V\setminus[\B_{\rm d}]$ are simultaneously empty for a cycle it
must be that the sets $\B_{\rm r}$ and $\B_{\rm d}$ must contain
equal number of edges. This means that the number of nodes $q$ is
necessarily an even number, yielding that $q/2$ is an integer. For
(a) we have $q/2=2$, an even number, while for (b) $q/2=3$ is odd.
Via Corollary~\ref{cor:cycle} we arrive therefore at the conclusion
that only the cycle (b) is SSS. Finally, consider the trees (c) and
(d). The tree (c) has two leaves that belong to $[\B_{\rm r}]$. In
such a case we cannot use Corollary~\ref{cor:tree} since the
condition given there is only sufficient. For the other tree,
however, the set $[\B_{\rm r}]$ contains a single leaf. Hence the
interconnection (d) must be SSS.

\section{Conclusion}

In this paper we presented a structural analysis of synchronization
in linear networks of identical oscillators (e.g. LC circuits)
coupled through both dissipative connectors (e.g. resistors) and
restorative connectors (e.g. inductors). We provided answers to the
following two questions. First, for a given coupling structure, when
can we find a suitable set of coupling strengths that guarantees
asymptotic synchronization of the oscillators? Second, for what type
of structures is synchronization guaranteed regardless of the
coupling strengths? The answer to the first question turned out to
be very simple: A suitable choice of parameter values (yielding
synchronization) exists when the network is connected and is not
entirely free of dissipative coupling. The second problem however
admitted only a more elaborate solution, which required us to
introduce flow diagrams (defined through three relatively
nontechnical rules) for our analysis. This solution yielded simple
conditions on synchronization for networks whose coupling topology
is either a path or a cycle or a tree.

\bibliographystyle{plain}
\bibliography{references}
\end{document}